\newcounter{notes}
\newcommand{\ignore}[1]{}
\definecolor{maxime}{rgb}{0,.6,0}
\definecolor{fred}{rgb}{0,0,0.8}
\newtheorem{theorem}{Theorem}
\newtheorem{proposition}[theorem]{Proposition}
\newtheorem{corollary}[theorem]{Corollary}
\newtheorem{lemma}[theorem]{Lemma}
\newtheorem{question}[theorem]{Question}
\theoremstyle{definition}
\newtheorem{definition}[theorem]{Definition}
\newtheorem{remark}[theorem]{Remark}
\newtheoremstyle{theoremwithref}{}{}{\itshape}{}{\bfseries}{.}{.5em}{#1 #2 #3}
\theoremstyle{theoremwithref}
\newcommand{\Z}{\mathbb{Z}}
\newcommand{\T}{\mathbb{T}}
\newcommand{\cA}{\mathcal{A}}
\newcommand{\cR}{\mathcal{R}}
\newcommand{\cW}{\mathcal{W}}
\title[Expansive partially hyperbolic diffeomorphisms]
{Expansive partially hyperbolic diffeomorphisms with one-dimensional center}
\author[M. Sambarino]{Martin Sambarino$^*$}
\thanks{$^*$Partially supported by CSIC-Universidad de la República-Uruguay  project \textit{Estructuras topológicas de sistemas parcialmente hiperbóicos y aplicaciones.}}
\address{CMAT, Facultad de Ciencias, Universidad de la Rep\'{u}blica, Uruguay}
\email{samba@cmat.edu.uy}
\author[J. Vieitez]{José Vieitez}
\address{DMEL, CenUR Literal Norte, Universidad de la República}
\email{jvieitez@litoralnorte.udelar.edu.uy}
\begin{document}

\numberwithin{theorem}{section}

\begin{abstract}
  We give sufficient conditions for an expansive partially hyperbolic diffeomorphism with one-dimensional center to be (topologically) Anosov.\\
\\
\noindent MSC Classification: 37D20, 37D30, 37C70\\

\end{abstract}

\keywords{Partial hyperbolicity, expansive homeomorphism, Anosov diffeo\-mor\-phism}
\maketitle

\section{Introduction}\label{s.intro}

Partially hyperbolic diffeomorphisms appeared as a first kind of generalization of Anosov diffeomorphisms and it has been an active research area in the past decades. Moreover, during the last years there have been a lot of progress in classification results of partially hyperbolic diffeomorphisms in three manifolds (see \cite{P} and references therein).

Anosov diffeomorphisms have a fundamental property: expansiveness. Property that
is no longer true, in general, for partially hyperbolic diffeomorphisms.  Expansive systems on surfaces were classified (\cite{L},\cite{H1}) and great progress was made in three manifolds as well (\cite{V1}, \cite{V2}).

Our goal is to put together these two notions (partial hyperbolicity and expansiveness) and give some characterization. Before we state our main result let us recall the definitions of the above concepts.

A diffeomorphism $f:M\to M$ from a compact boundaryless manifold $M$ to itself is said to be partially hyperbolic if the tangent bundle  $TM=E^{ss}\oplus E^c\oplus E^{uu}$ splits into three invariant subbundles (stable, center and unstable) such that for some Riemannian metric there exists $\lambda, 0<\lambda< 1$ such that, for any $x\in M$ and unit vectors $v^{ss}, v^c, v^{uu}$ in $E^{ss}, E^c, E^{uu}$ respectively we have
$$\|Df_xv^{ss}\|<\lambda;\;\;\|Df_xv^{ss}\|<\lambda \|Df_xv^c\|<\lambda^2\|Df_xv^{uu}\|\;\mbox{ and }\|Df_xv^{uu}\|>\frac{1}{\lambda}.$$
We say that $f$ is Anosov if $E^c=\{0\}$ or if $E^c$ has a uniform -contracting or expanding- behavior (it could be the case where $E^c$ splits into two subbundles with uniform behavior as well). In the latter, the Anosov diffeomorphism can be regarded as a partially hyperbolic one as well.  It is well known that the existence of the bundles $E^{ss}, E^{uu}$ imply the existence of two invariant foliations $\cW^{ss}, \cW^{uu}$ called the stable and unstable ones. A partially hyperbolic diffeomorphism is said to be dynamically coherent if the bundles $E^{cs}:=E^{ss}\oplus E^c$ and $E^{cu}:=E^c\oplus E^{uu}$ give rise to invariant foliations $\cW^{cs}$ and $\cW^{cu}$ called  center-stable and center-unstable respectively. In this case $E^c$ is also tangent to an invariant foliation $\cW^c=\cW^{cs}\cap\cW^{cu}$ called the center one. We denote by $\cW^*_\varepsilon(x)$ the intrinsic ball of radius $\varepsilon$ centered at $x$ on the leave $\cW^*.$

A diffeomorphism $f:M\to M$ (or more generally a homeomorphism $f:M\to M$ from a compact metric space into itself) is said to be expansive if there exists $\alpha>0$ such that
$$\mbox{ if } dist(f^n(x),f^n(y))\le\alpha \mbox{ for all }n\in\Z \mbox{ for some } x,y\in M \mbox{ then } x=y.$$
In this case $\alpha$ is said to be an expansivity constant of $f.$

What can be said of an expansive partially hyperbolic diffeomorphism? In this generality it would be almost impossible to say something interesting, since taking any expansive diffeomorphisms on a compact manifold $g:N\to N$, one could find an Anosov diffeomorphism $f:\T^d\to\T^d$ such that $f\times g:\T^d\times N\to \T^d\times N$ is an expansive partially hyperbolic diffeomorphism.  A possible  approach would be to impose some restrictions, for instance on the dimension of the center bundle.

Is an expansive partially hyperbolic diffeomorphism with one-dimensional center an Anosov diffeomorphism? A simple example answers negatively this question: just take any Anosov diffeomorphism  which is partially hyperbolic with one-dimensional center (for instance a linear Anosov automorphisms on $\T^3$ with three different eigenvalues) and modify it so that in a fixed or periodic point it has one as an eigenvalue (but keeping that the fixed point is topologically hyperbolic and with the same stable  index, i.e., the dimension of the stable manifold): if it is done carefully it would be a partially hyperbolic diffeomorphism with one-dimensional center and conjugated to an Anosov diffeomorphism, but it is not an Anosov one (see \cite{M1} or \cite{PS}).


We say a diffeomorphism $f:M\to M$ is \textit{topologically Anosov} if there are two non trivial (topologically) transversal foliations $\cW^s$ and $\cW^u$ of complementary dimension such that for any $x\in M$ the leaves through $x$ are:
$$\cW^s(x):=\{y\in M: dist(f^n(x),f^n(y))\to_{n\to+\infty}0\},$$ $$\cW^u(x):=\{y\in M: dist(f^n(x),f^n(y))\to_{n\to-\infty}0\}.$$

{\begin{remark} \label{r.AH def}
Aoki and Hiraide, see \cite{AH}, define {\em Topological Anosov} to a homeomorphism which is expansive and has the pseudo orbit tracing property POTP (see Definition \ref{d.potp}).
This differs from the previous definition but in our context  both are equivalent (see Corollary \ref{c.equivalent}).
\end{remark}

\begin{question}\label{q,anosov}
If $f:M\to M$ is a topologically Anosov diffeomorphism,  is it true that it is conjugated to an Anosov one?
\end{question}
The answer is true for topological Anosov  defined on $\mathbb{T}^n$ (see \cite{H2}) or in a nilmanifold (see \cite{D}). The question in general could be hard, our question goes in the direction that if the stable and unstable sets form a true pair of complementary foliations may help to answer Question \ref{q,anosov} by finding an appropriate riemannian metric.

Our goal is to prove the following:

\begin{theorem}\label{t.main}
Let $f:M\to M$ be an expansive  partially hyperbolic diffeomorphism with one-dimensional center and dynamically coherent. Assume that one among the following conditions hold:
\begin{enumerate}
\item $\Omega(f)=M.$
\item The (topological) stable index of all the periodic points are the same.
\item $dim E^{uu}=1$ or $dim E^{ss}=1.$
\item $dim M\le 4.$
\item $f$  has the POTP on $M.$
\end{enumerate}
Then, $f$ is topologically Anosov.
\end{theorem}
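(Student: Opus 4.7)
The plan is to exploit expansivity to give the one-dimensional center bundle a topological dichotomy at every point, and to use each of the hypotheses (1)--(5) to promote this pointwise dichotomy to a global one. Once a global sign is chosen, the stable/unstable foliations required by the topological Anosov definition will be precisely $\cW^{cs},\cW^{uu}$ (or their mirror $\cW^{ss},\cW^{cu}$), and verifying the definition is direct.

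First I would establish the pointwise dichotomy. Let $\alpha>0$ be an expansivity constant. For $x\in M$ consider a short center arc $I_x\subset\cW^c_\varepsilon(x)$; if $y,z\in I_x$ are distinct then expansivity forces some iterate to separate them by more than $\alpha$. Combining this with the domination of $E^c$ by the strong bundles and the continuity of $\cW^c$, each $x$ satisfies exactly one of: $|f^n(I_x)|\to 0$ as $n\to+\infty$ (put $x\in A^s$), or $|f^n(I_x)|\to 0$ as $n\to-\infty$ (put $x\in A^u$). Disjointness of $A^s$ and $A^u$ follows by taking $y\in I_x$ close enough to $x$ that, if $x$ were in both sets, the entire two-sided orbit $\{f^n(y)\}_{n\in\Z}$ would stay within $\alpha$ of $\{f^n(x)\}$, contradicting expansivity. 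Both sets are $f$-invariant, and standard dominated-splitting/graph-transform arguments show them to be open. If $A^s=M$ (the case $A^u=M$ is symmetric), then setting $\cW^s:=\cW^{cs}$ and $\cW^u:=\cW^{uu}$ gives the desired structure: strong-stable contraction together with the $A^s$-contraction along $\cW^c$ identify $\cW^{cs}(x)$ with the topological stable set of $x$, and $\cW^{uu}(x)$ is immediately its topological unstable set.

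It remains to show that each hypothesis forces $A^s=M$ or $A^u=M$. Case (5) follows directly from Aoki--Hiraide together with Remark \ref{r.AH def} and Corollary \ref{c.equivalent}. For (2), hyperbolic periodic points in $A^s$ have stable index $\dim E^{ss}+1$ while those in $A^u$ have index $\dim E^{ss}$; a uniform periodic index therefore confines periodic points to a single region, and an expansivity-based density argument then empties the other. Case (1) uses $\Omega(f)=M$ and a closing-lemma-type argument in the partially hyperbolic setting to produce periodic points in every nonempty open invariant set, reducing to (2). For (3), if $\dim E^{uu}=1$ then the $\cW^{cu}$-leaves are $2$-dimensional surfaces carrying the transverse one-dimensional foliations $\cW^c$ and $\cW^{uu}$; a Denjoy/Poincar\'e-type surface argument forbids mixed behavior of $\cW^c$ on a single leaf, and symmetrically for $\dim E^{ss}=1$. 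Finally (4) is a dimension count: with $\dim E^c=1$ and $\dim M\le 4$ one has $\dim E^{ss}\le 1$ or $\dim E^{uu}\le 1$, so (4) reduces to (3). The main obstacle I anticipate is case (1): without POTP or a uniform periodic index, producing enough periodic orbits in both $A^s$ and $A^u$ requires a closing-type argument specifically adapted to expansive partially hyperbolic systems with one-dimensional center, and this is likely the technical heart of the paper.
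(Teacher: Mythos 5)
The central step of your argument---that every $x\in M$ lies in $A^s$ or $A^u$, i.e.\ that some short center arc through $x$ contracts either under forward or under backward iteration---is false, and it is exactly the point that the hypotheses (1)--(5) are there to circumvent. Expansivity only rules out a center arc whose iterates stay bounded in \emph{both} time directions; it does not rule out an arc whose length tends to infinity both as $n\to+\infty$ and as $n\to-\infty$. This third alternative is option (3) of the paper's Proposition \ref{p.options} (the ``topological quasi-Anosov'' trichotomy), and it genuinely occurs for wandering points lying between a proper attractor and a proper repeller---compare the Franks--Robinson quasi-Anosov example discussed in the introduction. If your dichotomy held, then together with your claimed openness of $A^s$ and $A^u$ and the connectedness of $M$ the theorem would follow with \emph{no} hypothesis at all, whereas the unconditional statement is only a conjecture in the paper. (The openness claim is itself unjustified: what one actually gets is that the set $\cR$ of points whose whole center leaf is forward-bounded is \emph{closed} and repelling.)

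Because the dichotomy fails, the real work is missing from your proposal: one must introduce the repeller $\cR$ and the attractor $\cA$ (points whose entire center leaf is forward, resp.\ backward, bounded), prove a shadowing lemma on $\cA\cup\cR$ in order to obtain the periodic points and the index statement you invoke in case (2), show that the center-(un)stable leaves through $\cA\cup\cR$ cover $M$ (Lemma \ref{l.omega} and Corollary \ref{c.cover}), and, in case (3), run a Hiraide--Newhouse codimension-one argument (Proposition \ref{p.newhouse}) showing that a proper codimension-one repeller would force a center-stable leaf disjoint from $\cA\cup\cR$, contradicting the covering property. Your treatment of the individual cases also drifts from what is needed: case (1) requires no closing lemma---once $\cA$ and $\cR$ are known to be a proper attractor and repeller, $\Omega(f)=M$ immediately forbids their coexistence because nearby points are wandering; case (5) as you state it is circular, since Corollary \ref{c.equivalent} is itself proved via Lemma \ref{l.potpM}; and the ``Denjoy/Poincar\'e surface argument'' in case (3) targets mixed center behavior on a single leaf, which is not the actual obstruction.
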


 \vskip 5pt
Recall that a diffeomorphism $f:M\to M$ is quasi-Anosov if  $\|Df_x^n v\|\to\infty$ as $n\to +\infty$ or $n\to -\infty$ for any $x\in M$ and any $v\in T_xM-\{0\}.$ Mañé \cite{M2} showed that quasi-Anosov diffeomorphisms are expansive, satisfy Axiom A with $T_x\cW^s(x)\cap T_x\cW^u(x)=\{0\}$ for any $x\in M.$ We will get also the following:

\begin{corollary}\label{c.qa}
Let $f:M\to M$ be a dynamically coherent partially hyperbolic diffeomorphism with $1$-dimensional center and quasi-Anosov. Assume that one of the following hold:
\begin{enumerate}
\item $\Omega(f)=M.$
\item The stable index of all the periodic points are the same.
\item $dim E^{uu}=1$ or $dim E^{ss}=1.$
\item $dim M\le 4.$
\item $f$ has the POTP on $M.$
\end{enumerate}
Then, $f$ is  Anosov.
\end{corollary}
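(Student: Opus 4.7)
The idea is to bootstrap Theorem \ref{t.main} using the quasi-Anosov structure theory of Mañé recalled just before the statement. The first step is to apply Mañé's theorem \cite{M2} to extract three consequences of the quasi-Anosov hypothesis: $f$ is expansive, $f$ satisfies Axiom A, and at every $x\in M$ the stable and unstable manifolds $W^s(x), W^u(x)$ (characterised by asymptotic convergence of forward, resp.\ backward, iterates) satisfy $T_xW^s(x)\cap T_xW^u(x)=\{0\}$. Since $f$ is thereby expansive and one of the conditions (1)--(5) holds, Theorem \ref{t.main} applies and gives that $f$ is topologically Anosov.

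The second step is to upgrade the qualitative topological Anosov property to a pointwise dimension statement on the tangent bundle. By the definition of topological Anosov, $f$ admits two non-trivial invariant foliations $\cW^s, \cW^u$ of complementary dimension, and the leaf $\cW^s(x)$ agrees set-theoretically with the Axiom A stable manifold $W^s(x)$, since both are defined by asymptotic forward convergence; likewise for $\cW^u$ and $W^u$. Hence $\dim W^s(x)$ and $\dim W^u(x)$ are constant in $x$ and satisfy $\dim W^s(x)+\dim W^u(x)=\dim M$. Combining with Mañé's transversality yields the pointwise direct sum
\[
 T_xM=T_xW^s(x)\oplus T_xW^u(x)\quad\text{for every }x\in M.
\]

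The last step is a classical consequence: an Axiom A diffeomorphism whose stable and unstable manifolds span $T_xM$ at every point is Anosov. Indeed, the spanning property, together with density of periodic points in $\Omega(f)$ and the local product structure on each basic set, forces $\Omega(f)=M$ and the continuous $Df$-invariant splitting $E^s\oplus E^u:=TW^s\oplus TW^u$ to be uniformly hyperbolic, the quasi-Anosov estimates supplying the uniformity on the new ``center-like'' directions that arise when enlarging from $E^{ss}\oplus E^{uu}$ to $E^s\oplus E^u$. The genuine content of the corollary is therefore Theorem \ref{t.main}; the remainder is a direct combination of dimension counting with Mañé's structure theory, and no additional step presents a serious obstacle.
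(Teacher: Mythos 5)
Your overall route is sound and in fact streamlines the paper's argument: the paper handles cases (3) and (4) by re-running the Proposition \ref{p.newhouse} argument to conclude that either $\cA^+$ or $\cA^-$ is empty, hence that all periodic points have the same stable index, and only then invokes Ma\~n\'e; you instead invoke Theorem \ref{t.main} wholesale, read off from the definition of topologically Anosov that the stable sets form a foliation of constant dimension complementary to that of the unstable sets, and deduce the constant index. That identification ($\cW^s(x)=W^s(x)$ set-theoretically, hence equal dimensions by invariance of domain) is correct and delivers exactly the hypothesis of Ma\~n\'e's theorem that a quasi-Anosov diffeomorphism whose periodic points all have the same stable index is Anosov --- which is precisely how the paper closes every case.

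The problem is your last step, where instead of citing that theorem you sketch a proof of ``Axiom A plus pointwise spanning implies Anosov'' and assert along the way that the spanning property \emph{forces} $\Omega(f)=M$. That assertion is not justifiable: every Anosov diffeomorphism is Axiom A and trivially satisfies $T_xM=T_xW^s(x)\oplus T_xW^u(x)$ everywhere, so your claim would show that every Anosov diffeomorphism has $\Omega(f)=M$, a well-known open problem. Nothing you need depends on it: the conclusion ``$f$ is Anosov'' means uniform hyperbolicity of $TM$ and carries no statement about $\Omega(f)$, and it is exactly what Ma\~n\'e proves in \cite{M2} under quasi-Anosov plus constant index, as the paper recalls in the introduction. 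Replace the hand-waved passage about ``uniformity on the new center-like directions'' by that citation and your proof is complete. Two minor remarks: for cases (1) and (2) the detour through Theorem \ref{t.main} is unnecessary, since Ma\~n\'e's characterization applies directly; and for case (5) the paper also offers the alternative of Axiom A plus POTP implying strong transversality and hence, with expansiveness, Anosov.
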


It follows trivially that the first two items imply that $f$ is Anosov from the characterization above ( \cite{M2}) (and there is no need of the partial hyperbolic structure).  We will show how the last three imply the corollary at the end of the following section.
\vskip 5pt
Regarding our proof (see Lemma \ref{l.atrep}) a question comes up.  Franks and Robinson  had  constructed a famous example of a quasi-Anosov diffeomorphism on a $3$-manifold which is not Anosov (see \cite{FR}).  It is expansive and has one hyperbolic attractor and one hyperbolic repeller.  However, this example cannot be partially hyperbolic by the structure of reebless foliation of codimension one in three manifolds (see \cite{HP}). See also Proposition \ref{p.newhouse} and the proof of Theorem \ref{t.main}.

\begin{question}
Is it possible to construct a Franks-Robinson type example of an expansive partially hyperbolic diffeomorphism with one-dimensional center having one hyperbolic attractor and one hyperbolic repeller on a manifold of dimension at least 5?
\end{question}

Our results implies that it cannot be constructed on a manifold with $dimM \le 4.$ There is some evidence that would imply that the answer to this question is NO. Indeed, we conjecture that \textit{any} expansive partially hyperbolic diffeomorphism with one-dimensional center must be topologically Anosov.
\vskip 5pt
\noindent\textbf{Acknowledgments:} We would like to thank Alexander Arbieto for proposing the problem to us. We would also like to thank fruitful conversations with S. Martinchich and R. Potrie and the anonymous referees who helped us to improve the paper and its presentation.

\section{Proof of Theorem \ref{t.main} and Corollary \ref{c.qa}}\label{s.proofmain}

We will prove Theorem \ref{t.main} based on the next proposition that we will prove later in Section \ref{s.proofprop}.  Through the whole section, $f$ will denote an expansive partially hyperbolic diffeomorphism with one-dimensional center and dynamically coherent, unless is explicitly stated. We denote by $\cW^{ss},  \cW^{uu}, \cW^c$ the stable, unstable and center foliations respectively. We denote by $I^c$ any arc contained in a center leaf, and by $|I^c|$ its length.

\begin{proposition}\label{p.options}
Let $f:M\to M$ be an expansive partially hyperbolic diffeomorphism with one-dimensional center and dynamically coherent. Let $I^c$ be any center arc, then one and only one of the following statements hold:
\begin{enumerate}
\item $\displaystyle{\lim_{n\to +\infty}|f^n(I^c)|=0}$ and $\displaystyle{\lim_{n\to -\infty}|f^n(I^c)|=\infty}.$
\item $\displaystyle{\lim_{n\to +\infty}|f^n(I^c)|=\infty}$ and $\displaystyle{\lim_{n\to -\infty}|f^n(I^c)|=0}.$
\item $\displaystyle{\lim_{n\to +\infty}|f^n(I^c)|=\infty}$ and $\displaystyle{\lim_{n\to -\infty}|f^n(I^c)|=\infty}.$
\end{enumerate}
Moreover, if $I_n^c$ is a sequence of center arcs converging to a (center) arc $J^c$  as $n\to\infty$ and $|f^m(I^c_n)|\to_m 0$ then $|f^m(J^c)|\to_m 0$ as $m\to +\infty$ or $m\to -\infty.$\footnote{This proposition says that $f$ is like a \textit{``topological quasi-Anosov''}.}

\end{proposition}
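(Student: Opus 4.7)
The whole argument is built on the following easy consequence of expansivity, which will serve as the only non-trivial input. If $\alpha>0$ is an expansivity constant and $I$ is any non-trivial center arc with endpoints $p,q$, then some iterate $f^{n_0}(I)$ must have length $>\alpha$: otherwise $d(f^n(p),f^n(q))\le |f^n(I)|\le\alpha$ for every $n\in\Z$, contradicting expansivity. I will refer to this as the \emph{preliminary bound}.

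The core step is the forward dichotomy $\lim_{n\to+\infty}|f^n(I^c)|\in\{0,+\infty\}$; the backward dichotomy is symmetric. Suppose, for contradiction, that the forward orbit has $\limsup_{n\to+\infty}|f^n(I^c)|=\eta>0$ but remains bounded by some $C<\infty$. Picking $n_k\to+\infty$ with $|f^{n_k}(I^c)|\ge\eta$, parameterizing each $f^{n_k}(I^c)$ by arc length in its center leaf, and combining Arzelà--Ascoli with the continuity of the center foliation $\cW^c$, I extract a Hausdorff limit $J$ which is itself a center arc with $|J|\ge\eta>0$. A diagonal extraction gives in addition $|f^m(J)|\le C$ for every $m\in\Z$, producing a non-trivial center arc whose entire orbit has uniformly bounded length.

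I expect the main obstacle to be showing that such a bounded-orbit non-trivial arc cannot exist. The plan here is a minimisation argument inside the compact, $f$-invariant family $\cK$ of Hausdorff subsequential limits of $\{f^m(J):m\in\Z\}$. Every sub-arc of a bounded-orbit arc is itself bounded-orbit, so arbitrarily short sub-arcs $J_k\subset J$ have bounded orbits; the preliminary bound then furnishes, for each $k$, an iterate $f^{n_k}(J_k)$ of length between $\alpha$ and $\|Df\|_\infty\alpha$. Taking a Hausdorff limit of $f^{n_k}(J_k)$ and using the choice of $n_k$ as the first forward time reaching $\alpha$ produces an arc $L$ with $|L|\ge\alpha$, whose backward iterates remain uniformly $\le\alpha$ while its forward iterates stay $\le C$. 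The delicate step is then to close up the argument: iterating this construction on $L$ itself, combined with the upper semicontinuity of arc length under Hausdorff convergence and the fact that bounded orbits pass to Hausdorff limits, produces arcs of arbitrarily small length whose iterated images exceed $\alpha$, ultimately forcing a violation of either the preliminary bound or expansivity at the level of the endpoints.

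Combining the dichotomies in both time directions gives four a priori cases. The case $\lim_{n\to\pm\infty}|f^n(I^c)|=0$ would give $\sup_n|f^n(I^c)|<\infty$, and applying the non-existence of bounded-orbit non-trivial arcs to $I^c$ itself contradicts this; so exactly one of (1)--(3) holds. Finally, for the closing assertion, assume $I^c_n\to J^c$ Hausdorff and each $|f^m(I^c_n)|\to 0$ as $m\to+\infty$; by the trichotomy each $I^c_n$ is of type (1). If $J^c$ were of type (3), then both $|f^M(J^c)|$ and $|f^{-M}(J^c)|$ exceed $2\alpha$ for some $M>0$; defining $m_n$ to be the first time $k\ge M$ where $|f^k(I^c_n)|\le\alpha$, which exists since $I^c_n$ contracts forward, and extracting a Hausdorff limit of $f^{m_n}(I^c_n)$ yields a non-trivial center arc whose backward iterates (by steps up to $m_n\to\infty$) remain bounded by $\max(|I^c_n|,\alpha)$ --- again contradicting the bounded-orbit step above.
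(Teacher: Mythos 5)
Your overall strategy (Hausdorff limits of center arcs, first hitting times of the scale $\alpha$, reduction to expansivity at the level of endpoints) is the same toolkit the paper uses, but as written the argument has genuine gaps. First, your contradiction hypothesis for the forward dichotomy assumes the forward lengths are bounded by some $C<\infty$; this leaves untreated the case $\liminf_{n\to+\infty}|f^n(I^c)|<\limsup_{n\to+\infty}|f^n(I^c)|=\infty$, which is not vacuous a priori and cannot be reduced to a bounded-orbit arc by taking limits along the $\liminf$-subsequence (the forward iterates of such a limit arc are not controlled). The paper handles this with a separate counting argument (Lemmas \ref{l.borde} and \ref{l.sup}): a uniform $N$ such that any arc staying below $\alpha$ on a time window and saturating $\alpha$ must do so within $N$ of one of the ends; this bounds the number of $\alpha$-saturated pieces in a partition of $f^{n_j}(I^c)$ and hence all intermediate lengths. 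Nothing in your proposal plays this role.

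Second, the step you yourself flag as the main obstacle --- that no nontrivial center arc can satisfy $\sup_{m\in\Z}|f^m(J)|\le C$ --- is not actually closed. Your construction produces arcs $L$ with all \emph{backward} iterates $\le\alpha$, length about $\alpha$, and forward iterates only $\le C$; iterating the same construction on $L$ reproduces an arc of exactly the same type and never upgrades the forward bound from $C$ to $\alpha$, which is what you need before expansivity of the endpoints applies. Closing this again requires the uniform-$N$ statement of Lemma \ref{l.borde}, or the paper's device of taking maximal $\alpha$-bounded sub-arcs over two-sided time windows chosen so that the saturation time is forced away from both ends. Third, your proof of the closing assertion is incorrect as stated: if $J^c$ expands forward, then for the forward-contracting arcs $I^c_n$ the lengths $|f^k(I^c_n)|$ for $M\le k<m_n$ are $>\alpha$, and in fact unbounded in $n$ since $I^c_n$ tracks the expanding $J^c$ for longer and longer times before collapsing; so the backward iterates of $f^{m_n}(I^c_n)$ are not bounded by $\max(|I^c_n|,\alpha)$ and the limit arc has no bounded orbit. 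The paper instead partitions $I^c_n$ into sub-arcs all of whose forward iterates stay $\le\alpha$ and extracts one whose saturation time tends to infinity.
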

We now introduce the following two sets:

$$\cA^+=\{x\in M: \;\exists\, I^c \mbox{ with } x\in int(I^c) \mbox{ and } \sup_{n\ge 0}|f^n(I^c)| <\infty\}$$
$$\cA^-=\{x\in M: \;\exists\, I^c \mbox{ with } x\in int(I^c) \mbox{ and } \sup_{n\le 0}|f^n(I^c)| <\infty\}$$

The following lemma is straightforward:

\begin{lemma}\label{l.direct}
The following statements hold:
\begin{enumerate}
\item $\cA^+\cap \cA^-=\emptyset.$
\item $\cA^{\pm}$ are invariant sets
\item $\cA^+$ is saturated by stable leaves, i.e., if $x\in \cA^+$ then $\cW^{ss}(x)\subset \cA^+.$ The same for $\cA^-$ with unstable leaves.
\end{enumerate}
\end{lemma}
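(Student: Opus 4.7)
For item~(1), the plan is to combine the two witnesses into one. If $x \in \cA^+ \cap \cA^-$ with respective witnesses $I^c_+$ and $I^c_-$, then $J^c := I^c_+ \cap I^c_-$ is a center arc containing $x$ in its interior whose forward \emph{and} backward orbits both have bounded length. Proposition~\ref{p.options} asserts that each of $\lim_{n \to +\infty}|f^n(J^c)|$ and $\lim_{n \to -\infty}|f^n(J^c)|$ equals $0$ or $\infty$, but the combination ``both equal $0$'' is absent from the three listed alternatives, contradiction. For item~(2), if $I^c$ witnesses $x \in \cA^+$, then $f^{\pm 1}(I^c)$ is a center arc containing $f^{\pm 1}(x)$ in its interior, and its forward orbit is, up to a single term of finite length, a shift of the forward orbit of $I^c$; in particular its supremum is still finite. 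The case of $\cA^-$ is symmetric, using the backward orbit.

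For item~(3), let $x \in \cA^+$ be witnessed by $I^c$, so $|f^n(I^c)| \to 0$ as $n\to+\infty$ by Proposition~\ref{p.options}. Let $y \in \cW^{ss}(x)$. The plan is to exhibit a witness $J^c$ for $y$ by pulling back a stable-holonomy image of a sufficiently late iterate of $I^c$. By dynamical coherence, inside $\cW^{cs}$ the foliations $\cW^c$ and $\cW^{ss}$ form a continuous pair of transverse foliations. Compactness of $M$ then furnishes constants $\delta, K > 0$ such that the stable holonomy between center leaves of two points at intrinsic stable distance less than $\delta$ is defined on center arcs of length less than $\delta$ and distorts their length by at most a factor $K$. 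Since $\cW^{ss}$ is uniformly contracted by $f$, the intrinsic stable distance from $f^n(x)$ to $f^n(y)$ tends to $0$. Choose $n_0$ large enough that both this distance and $|f^{n_0}(I^c)|$ are below $\delta$, define $J^c_{n_0}$ to be the stable-holonomy image of $f^{n_0}(I^c)$ on $\cW^c(f^{n_0}(y))$, and set $J^c := f^{-n_0}(J^c_{n_0})$. By $f$-equivariance of the stable foliation, for $n \ge n_0$ the arc $f^n(J^c)$ is the stable-holonomy image of $f^n(I^c)$, so $|f^n(J^c)| \le K|f^n(I^c)| \to 0$; the finitely many remaining iterates are bounded trivially. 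Hence $y \in \cA^+$. The case of $\cA^-$ is symmetric, using unstable holonomy inside $\cW^{cu}$.

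The only nontrivial ingredient is the uniform length-distortion constant $K$ for stable holonomy between nearby center leaves; this is where continuity of the invariant foliations and compactness of $M$ enter, and constitutes the main point in an otherwise direct argument. Given the one-dimensionality of $E^c$ and dynamical coherence, this is a standard consequence of the local product structure of $\cW^{ss}$ and $\cW^c$ inside $\cW^{cs}$.
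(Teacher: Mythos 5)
Your proof is correct and follows essentially the same route as the paper: for item (1) you produce a single center arc whose lengths are bounded in both time directions (the paper does the same, implicitly by shrinking to a common witness), item (2) is immediate for both, and for item (3) both use stable holonomy inside $\cW^{cs}$ --- you are in fact more careful than the paper, which simply asserts $\lim_n|f^n(I^c)|=\lim_n|f^n(J^c)|$ without the pullback-from-a-late-iterate device. The one micro-quibble is that a merely continuous holonomy need not admit a multiplicative distortion constant $K$; but uniform continuity of the local product structure on the compact $M$ (center arcs of small length and small stable separation have small holonomy images, uniformly) is all your argument actually uses, so nothing breaks.
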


\begin{proof}
The first item is a direct consequence of expansiveness. Indeed, if $x\in \cA^+\cap \cA^-$ then there exists a center arc $I^c$ such that $|f^n(I^c)|$ is bounded for all $n\in\Z$ contradicting Proposition \ref{p.options}.

The second item is obvious. The third one is almost direct as well. Let $x\in \cA^+$ and $y\in \cW^{ss}(x).$ If we consider the stable holonomy inside $\cW^{cs}(x)$ between $\cW^c(x)$ and $\cW^c(y)$, the image of small arc $I^c$ by this holonomy is a center arc $J^c\subset \cW^c(y)$ with $y\in int(J^c).$ Iterating forwardly, it follows that $\lim_{n\to\infty}|f^n(I^c)|=\lim_{n\to\infty}|f^n(J^c)|.$
\end{proof}

\begin{lemma}\label{l.vacio}
If $\cA^+=\emptyset$ then $\cA^-=M$ and vice versa, if $\cA^-=\emptyset$ then $\cA^+=M.$
\end{lemma}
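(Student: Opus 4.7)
I would argue by contradiction. Suppose $\cA^+ = \emptyset$ and that some $x_0 \in M$ has $x_0 \notin \cA^-$; the symmetric implication follows by replacing $f$ with $f^{-1}$. The hypothesis $\cA^+ = \emptyset$ means no center arc anywhere in $M$ is in case~(1) of Proposition~\ref{p.options}, because any interior point of such an arc would lie in $\cA^+$. Combined with $x_0 \notin \cA^-$ (which rules out case~(2) for arcs having $x_0$ in their interior), every center arc $I^c$ with $x_0 \in \mathrm{int}(I^c)$ must fall into case~(3), so $|f^n(I^c)| \to \infty$ both as $n \to +\infty$ and as $n \to -\infty$.

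The next step is to manufacture a case-(2) arc near the $\alpha$-limit of $x_0$, exploiting backward expansion together with the ``moreover'' clause. For each small $\epsilon>0$, let $I^c_\epsilon$ be the sub-arc of length $2\epsilon$ centered at $x_0$ inside a fixed center arc through $x_0$; each $I^c_\epsilon$ is in case~(3). Let $n_\epsilon$ be the smallest positive integer with $|f^{-n_\epsilon}(I^c_\epsilon)| \geq \alpha$, where $\alpha$ is an expansivity constant. Then $n_\epsilon \to \infty$ as $\epsilon\to 0$, and $J_\epsilon := f^{-n_\epsilon}(I^c_\epsilon)$ has length in $[\alpha, C\alpha]$ with $C := \|Df^{-1}\|_\infty$, while $f^{-n_\epsilon}(x_0) \in \mathrm{int}(J_\epsilon)$. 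Passing to a subsequence, $J_\epsilon \to J$ and $f^{-n_\epsilon}(x_0) \to z \in J$. Since $|f^{n_\epsilon}(J_\epsilon)| = 2\epsilon \to 0$ while $n_\epsilon \to +\infty$, the ``moreover'' clause of Proposition~\ref{p.options} forces $J$ to be in case~(1) or~(2); case~(1) contradicts $\cA^+ = \emptyset$, so $J$ is in case~(2). With some care to ensure $z$ does not degenerate to an endpoint of $J$ (e.g., by subdividing $I^c_\epsilon$ on each side of $x_0$ and passing to further limits), one gets $z \in \mathrm{int}(J) \subset \cA^-$.

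To finish, I would propagate $z \in \cA^-$ back along the convergence $f^{-n_\epsilon}(x_0)\to z$ to conclude $x_0 \in \cA^-$, the desired contradiction. The plan is to establish that under the hypothesis $\cA^+ = \emptyset$ the set $\cA^-$ is \emph{open} in $M$: combine the $\cW^{uu}$-saturation of $\cA^-$ (Lemma~\ref{l.direct}(3)) with the local product structure of $M$ arising from partial hyperbolicity and dynamical coherence, and the observation that the stable holonomy between nearby center leaves sends a case-(2) arc to an arc which — by the ``moreover'' clause — must be case~(1) or~(2), and hence case~(2) since case~(1) is globally excluded. Once $\cA^-$ is open, the convergence $f^{-n_\epsilon}(x_0) \to z \in \cA^-$ forces $f^{-n_\epsilon}(x_0) \in \cA^-$ for $\epsilon$ small, and $f$-invariance of $\cA^-$ (Lemma~\ref{l.direct}(2)) gives $x_0 \in \cA^-$.

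The main obstacle is precisely this openness step. The ``moreover'' clause only provides \emph{closedness} of the family of case-(1)-or-(2) arcs, whereas openness of $\cA^-$ asks for the converse statement that arcs near a case-(2) arc are themselves case~(1) or~(2). This conversion requires leveraging the global exclusion of case~(1) provided by the standing hypothesis $\cA^+ = \emptyset$ together with an approximation argument for stable holonomies: these are only Hölder in general and can distort center-arc lengths non-uniformly under long backward iteration (as the two underlying $\cW^c$-leaves separate in $\cW^{ss}$), so the limiting argument ruling out case~(3) for the holonomy image is the delicate core of the proof.
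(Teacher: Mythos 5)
Your construction of the arcs $J_\epsilon=f^{-n_\epsilon}(I^c_\epsilon)$ is essentially the paper's, but you then take a wrong turn that ends in an acknowledged gap. Two problems. First, the invocation of the ``moreover'' clause of Proposition \ref{p.options} is not legitimate: that clause assumes each arc of the approximating sequence itself satisfies $\lim_m|f^m(\cdot)|=0$, whereas each $J_\epsilon$ is, by your own standing assumption, a case-(3) arc whose iterates grow in both directions; having the single iterate $f^{n_\epsilon}(J_\epsilon)=I^c_\epsilon$ be short is not the hypothesis of that clause. Second, and more seriously, your final step hinges on $\cA^-$ being open under the hypothesis $\cA^+=\emptyset$. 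This is nowhere proved, the holonomy/limiting argument you sketch for it is not carried out, and a posteriori $\cA^-=M$, so this ``openness'' is essentially the full content of the lemma; you correctly identify it as the unresolved core. As written the proof is incomplete.

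The irony is that your construction already finishes the proof, and in exactly the way the paper does (with threshold $1$ in place of $\alpha$). By minimality of $n_\epsilon$ you have $|f^{j}(J_\epsilon)|=|f^{-(n_\epsilon-j)}(I^c_\epsilon)|<\alpha$ for every $1\le j\le n_\epsilon$, together with $|J_\epsilon|\le C\alpha$. Since $n_\epsilon\to\infty$, any accumulation arc $J$ satisfies $|f^{j}(J)|\le \alpha$ for all $j\ge 1$, hence $\sup_{n\ge0}|f^n(J)|\le C\alpha<\infty$, and $|J|\ge\alpha$ so $J$ is nondegenerate. Every interior point of $J$ therefore lies in $\cA^+$, contradicting $\cA^+=\emptyset$ outright. (Equivalently, by Lemma \ref{l.lim0} the arc $J$ is a case-(1) arc, not the case-(2) arc you claim.) There is no need to locate $z$, to worry about $z$ degenerating to an endpoint, or to propagate membership in $\cA^-$ back to $x_0$. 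Replace everything after the extraction of $J$ by this observation and your argument becomes correct and coincides with the paper's.
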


\begin{proof}
We will just prove one of the assertions since the other one is completely analogous.  Assume that $\cA^+=\emptyset$ and let $x\in M$ and a center arc $I^c$ with $x\in int(I^c).$ By Proposition \ref{p.options} we know that
$$\mbox{either (1) } \lim_{n\to -\infty}|f^n(I^c)|=0\;\;\mbox{ or (2) }\;\; \lim_{n\to -\infty}|f^n(I^c)|=\infty.$$
If $(1)$ holds for some $I^c$ then $x\in \cA^-.$ If not, then $(2)$ holds for every $I^c$ with $x\in int(I^c).$ Therefore, we get a sequence $I^c_n$ such that
\begin{itemize}
\item $|I ^c_n|\to_n 0.$
\item $\exists m_n\to\infty$ such that $|f^{-j}(I^c_n)|\le 1$ for $0\le j\le m_n$ and $|f^{-m_n}(I^c_n)|=1.$
\end{itemize}
Now, if $J^c$ is an accumulation arc of $f^{-m_n}(I^c_n)$ then $|f^n(J^c)|\le 1$ for $n\ge 0$ and so $\cA^+\neq\emptyset.$ A contradiction.
\end{proof}

\begin{corollary}\label{c.anosov}
If $\cA^+=\emptyset$ or
$\cA^-=\emptyset$ then $f$ is topologically Anosov.
\end{corollary}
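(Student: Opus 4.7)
The strategy, after reducing to the case $\cA^+ = \emptyset$ (whence $\cA^- = M$ by Lemma \ref{l.vacio}; the case $\cA^- = \emptyset$ is handled symmetrically by passing to $f^{-1}$), is to show that $\cW^s := \cW^{ss}$ and $\cW^u := \cW^{cu}$ witness $f$ as topologically Anosov. Dynamical coherence already guarantees that these are non-trivial, transverse, continuous foliations of complementary dimension, so the task reduces to identifying their leaves with the topological stable and unstable sets.

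The first observation I would record is that the hypothesis forces every sufficiently short center arc $I^c$ into case (2) of Proposition \ref{p.options}: each $x \in \cA^-$ gives an arc through $x$ which is not case (3), while $\cA^+ = \emptyset$ rules out case (1) globally, and any sub-arc of a case (2) arc remains case (2) by monotonicity of $|\cdot|$ under inclusion. This yields the easy inclusions for the candidate foliations. On one hand, $\cW^{ss}(x) \subseteq \{y : d(f^n x, f^n y) \to 0 \text{ as } n \to +\infty\}$ by stable contraction. On the other hand, for $y \in \cW^{cu}(x)$, one joins $x$ to $y$ by a finite concatenation of short $\cW^{uu}$-arcs (shrinking to $0$ backward by hyperbolic contraction) and short $\cW^c$-arcs (shrinking to $0$ backward by case (2)); the triangle inequality then gives $d(f^n x, f^n y) \to 0$ as $n \to -\infty$.

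The reverse inclusions are the heart of the argument and use local product structure combined with expansivity. Suppose $d(f^n x, f^n y) \to 0$ as $n \to -\infty$; I aim to show $y \in \cW^{cu}(x)$. For each large $N$, local product structure furnishes a point $p_N \in \cW^{cu}_\varepsilon(f^{-N}x) \cap \cW^{ss}_\varepsilon(f^{-N}y)$; setting $v_N := f^N(p_N) \in \cW^{cu}(x) \cap \cW^{ss}(y)$, stable contraction gives $d(v_N, y) \le \lambda^N \varepsilon \to 0$. If $v_N = y$ for some $N$ we are done; otherwise expansivity produces $k_N$ with $d(f^{k_N} v_N, f^{k_N} y) > \alpha$. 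Stable contraction along $\cW^{ss}(y)$ bounds $d(f^k v_N, f^k y)$ by $\varepsilon$ for all $k$ not too negative, so $k_N \to -\infty$ as $N \to \infty$. Writing $f^{k_N} v_N = f^{k_N+N}(p_N)$ and decomposing $p_N$ into its $\cW^c$ and $\cW^{uu}$ components inside $\cW^{cu}_\varepsilon(f^{-N}x)$, both components shrink to $0$ under backward iteration — the unstable part by uniform hyperbolic contraction, the center part by case (2) — so $d(f^{k_N} v_N, f^{k_N} x) \to 0$. Combined with the hypothesis $d(f^{k_N} x, f^{k_N} y) \to 0$, the triangle inequality forces $d(f^{k_N} v_N, f^{k_N} y) < \alpha$ for $N$ large, contradicting the choice of $k_N$. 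The symmetric argument (swapping the roles of $\cW^{ss}$ and $\cW^{cu}$) handles the forward-asymptotic points.

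The principal obstacle is extracting enough uniformity from Proposition \ref{p.options} to close this contradiction: the statement $|f^m(I^c)| \to 0$ is only pointwise, whereas $|f^{k_N+N}([f^{-N}x, r_N])|$ must become arbitrarily small along a family of initial arcs varying through $M$. I plan to handle this by using compactness of $M$ together with the ``moreover'' clause of Proposition \ref{p.options} to find a uniform $\varepsilon_0 > 0$ such that every center arc of length $\le \varepsilon_0$ is case (2); once this is in hand, the decay rate becomes upper-semicontinuous on a compact family of short arcs and thus uniform enough to complete the expansivity argument above.
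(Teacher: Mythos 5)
Your proposal is correct and follows the same route as the paper: Lemma \ref{l.vacio} gives $\cA^-=M$, every center arc then falls into case (2) of Proposition \ref{p.options}, and one concludes $\cW^u=\cW^{cu}$ and $\cW^s=\cW^{ss}$. The paper's own proof is only two lines and treats the reverse inclusions (that the topological stable/unstable sets are no larger than $\cW^{ss}$ and $\cW^{cu}$) as immediate; your local-product-structure/expansivity argument, including the uniformity of backward contraction of short $cu$-plaques, is precisely the standard verification the paper omits, and the same uniformity is what the paper itself extracts from expansiveness in the proof of Lemma \ref{l.potp}.
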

\begin{proof}
By the previous lemma, if $\cA^+=\emptyset$, we get that
$$\cW^c(x)\subset \cW^u(x)=\{y\in M: dist(f^{-n}(x),f^{-n}(y))\to_{n\to\infty} 0\}.$$
In other words $\cW^{cu}(x)=\cW^u(x)$ implying that $f$ is (topologically) Anosov. The case $\cA^-=\emptyset$ is analogous with $\cW^{cs}(x)=\cW^s(x).$
\end{proof}

We now introduce two more sets:

$$\cR=\{x\in\cA^+: \cW^c(x)\subset\cA^+\}\;\;\;\mbox{ and }\;\;\;\;\;\cA=\{x\in\cA^-: \cW^c(x)\subset\cA^-\}.$$

\begin{lemma}\label{l.atrep}
The following statements hold:
\begin{enumerate}
\item If $\cA^+\neq\emptyset$ then the set $\cR$ is a nonempty compact invariant and  repelling set with $\cW^{s}(x)=\cW^{cs}(x)\subset \cR$ for any $x\in\cR.$
\item If $\cA^-\neq\emptyset$ then the set $\cA$ is a nonempty compact invariant and  attracting set with $\cW^{u}(x)=\cW^{cu}(x)\subset \cA$ for any $x\in\cA.$

\end{enumerate}

\end{lemma}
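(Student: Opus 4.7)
The plan is to prove item (1); item (2) then follows by applying (1) to $f^{-1}$, since $\cA^+_{f^{-1}} = \cA^-_f$ and the roles of $\cW^{cs}$ and $\cW^{cu}$ are interchanged (so a repeller for $f^{-1}$ is an attractor for $f$). Assume $\cA^+\ne\emptyset$ throughout. The structural pieces come first. Invariance of $\cR$ under $f$ is immediate from Lemma \ref{l.direct}(2) and $f$-invariance of the center foliation: $\cW^c(f(x))=f(\cW^c(x))\subset\cA^+$ whenever $\cW^c(x)\subset\cA^+$. The $\cW^{cs}$-saturation uses Lemma \ref{l.direct}(3) twice: for $x\in\cR$, each $z\in\cW^c(x)\subset\cA^+$ has $\cW^{ss}(z)\subset\cA^+$, so $\cW^{cs}(x)\subset\cA^+$; and for $y\in\cW^{cs}(x)$, each point of $\cW^c(y)$ is the stable-holonomy image (inside $\cW^{cs}$) of a point of $\cW^c(x)\subset\cA^+$, hence lies in $\cA^+$ by Lemma \ref{l.direct}(3) again, so $\cW^c(y)\subset\cA^+$ and $y\in\cR$.

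The heart of the argument is nonemptiness and closedness of $\cR$. I would prove $\omega(x_0)\subset\cR$ for any $x_0\in\cA^+$ with arc $I^c$ satisfying $|f^n(I^c)|\to 0$. Given $p\in\omega(x_0)$ with $f^{n_k}(x_0)\to p$ and any $y\in\cW^c(p)$, plaque continuity of $\cW^c$ produces $y_k\in\cW^c(f^{n_k}(x_0))$ with $y_k\to y$ and center segments $I^c_k=I^c_{f^{n_k}(x_0),y_k}$ converging to $I^c_{p,y}$. Pulling back, the arcs $J_k=f^{-n_k}(I^c_k)\subset\cW^c(x_0)$ satisfy $|f^{n_k}(J_k)|=|I^c_k|\to|I^c_{p,y}|$ while $n_k\to\infty$; combining this with the trichotomy of Proposition \ref{p.options} forces each $J_k$ into case (1), so $|f^m(I^c_k)|\to 0$ as $m\to+\infty$ for each $k$. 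The ``moreover'' clause applied to $I^c_k\to I^c_{p,y}$ then yields that $I^c_{p,y}$ contracts in some time direction; the backward alternative is excluded by noting that case (2) for $I^c_{p,y}$ would force $\sup_{m\ge 0}|f^m(I^c_k)|$ to grow without bound in $k$, contradicting the fact that each $I^c_k$ is centered at $f^{n_k}(x_0)$ converging to $p\in\overline{\cA^+}$, where forward iterates of short arcs remain controlled. Hence $I^c_{p,y}$ contracts forward, $y\in\cA^+$, and $\cW^c(p)\subset\cA^+$, giving $p\in\cR$. Closedness follows by an analogous argument applied to a convergent sequence $x_n\to x$ with $x_n\in\cR$: approximate $y\in\cW^c(x)$ by $y_n\in\cW^c(x_n)\subset\cA^+$, note that pointwise membership of $I^c_{x_n,y_n}$ in $\cA^+$ forces case (1) (via finite covering by contracting sub-arcs), and apply the ``moreover'' clause.

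The identification $\cW^s(x)=\cW^{cs}(x)$ for $x\in\cR$ is two-sided: the inclusion $\cW^{cs}(x)\subset\cW^s(x)$ combines strong stable contraction with forward contraction of center arcs in $\cA^+\supset\cW^c(x)$, while the reverse follows from the classical partial-hyperbolicity fact that a point off $\cW^{cs}(x)$ has nontrivial strong unstable component that grows exponentially, obstructing forward asymptoticity. For the repelling property, take a thin $\cW^{cu}$-saturated neighborhood $U$ of $\cR$ and verify $\overline{f^{-1}(U)}\subset U$ and $\cR=\bigcap_{n\ge 0}f^{-n}(U)$, exploiting backward contraction along $\cW^{uu}$ combined with backward expansion of center arcs in $\cA^+$ (the latter from the trichotomy applied to arcs in $\cW^c(x)\subset\cA^+$). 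The main obstacle I anticipate is the direction-identification step in the nonemptiness argument -- ensuring the limit arc $I^c_{p,y}$ contracts forward rather than backward -- which requires coupling the trichotomy of Proposition \ref{p.options} with careful length control at the cutoff time $n_k$.
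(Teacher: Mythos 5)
Most of your outline (invariance, the two-step $\cW^{cs}$-saturation, $\cW^s(x)=\cW^{cs}(x)$, the repelling neighborhood $U=\bigcup_{z\in\cR}\cW^{uu}_\varepsilon(z)$, closedness via the ``moreover'' clause of Proposition \ref{p.options}, and deducing (2) from (1) by passing to $f^{-1}$) matches the paper. But the nonemptiness argument has a genuine gap: you run the dynamics in the wrong time direction. You claim $\omega(x_0)\subset\cR$ for $x_0\in\cA^+$. This cannot be right unless $\cA^+=\cR$, which is not known at this stage (the whole point of distinguishing $\cR$ inside $\cA^+$ is that a point may carry a small forward-contracting arc while other parts of its center leaf do not). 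Indeed, once $\cR$ is known to be a repeller with $\overline{f^{-1}(U)}\subset U$, the forward orbit of any $x_0\notin\cR$ eventually leaves $U$ and never returns, so $\omega(x_0)\cap\cR=\emptyset$ for every $x_0\in\cA^+\setminus\cR$. The correct statement, and the one the paper proves, is $\alpha(x_0)\subset\cR$: since $|f^n(I^c)|\to 0$ forward, the trichotomy gives $|f^{-n}(I^c)|\to\infty$, so the \emph{backward} iterates $f^{-m_k}(I^c)$ are arcs of growing length through $f^{-m_k}(x_0)\to z$ and converge to the whole leaf $\cW^c(z)$ on compact subsets; every compact sub-arc of $\cW^c(z)$ is then a limit of sub-arcs of $I^c$ pushed backward, each of which contracts forward, and the ``moreover'' clause yields $\cW^c(z)\subset\cA^+$. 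Forward iterates of $I^c$ shrink to a point and carry no information about a neighborhood of $p$ in $\cW^c(p)$, which is exactly why your construction has to introduce fresh arcs $I^c_k$ of macroscopic length.

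The two patches you propose for this do not hold up. The step ``$|f^{n_k}(J_k)|\to|I^c_{p,y}|$ combined with the trichotomy forces each $J_k$ into case (1)'' is a non sequitur: for a fixed $k$, knowing the length of a single iterate $f^{n_k}(J_k)$ is compatible with all three cases. And the exclusion of the backward alternative rests on ``forward iterates of short arcs remain controlled near $\overline{\cA^+}$,'' which is not a fact you have: membership in $\cA^+$ only asserts the existence of \emph{some} forward-contracting arc through a point, with no uniformity in length or in the bound, and your arcs $I^c_{p,y_k}$ are of fixed macroscopic length anyway. Replacing the $\omega$-limit construction by the paper's $\alpha$-limit construction repairs the proof; the rest of your argument can stand as written.
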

\begin{proof}
We will prove just the first item since the second one is completely similar. Let $x_n\in\cR$ be such that $x_n\to x.$ Let $y\in \cW^c(x).$ We may consider a center arc $J^c$ having $x$ and $y$ in its interior and a sequence $I^c_n\subset \cW^c(x_n)$ such that $I^c_n\to_n J^c.$ It follows by the last sentence of Proposition \ref{p.options} that $J^c\subset \cA^+.$ Since $y$ was arbitrary we get that $\cW^c(x)\subset \cA^+$ and so $\cR$ is compact.

Besides, if $x\in \cA^+$ then its $\alpha$-limit set $\alpha(x)\subset\cR.$ Indeed, if $x\in int(I^c)$ with $|f^n(I^c)|\to_{n\to+\infty}0$ and $f^{-m_k}(x)\to z$ then 
$f^{-m_k}(I^c)\to_k \cW^c(z)$ (on compact subsets) and, by Proposition \ref{p.options} again, we conclude that $\cW^c(z)\subset \cA^+.$ We conclude that $\cR$ is nonempty provided $\cA^+$ is nonempty.

 Finally, if $x\in\cR$ we know that $\cW^c(x)\subset\cA^+$ and hence $\cW^c(x)\subset \cW^s(x).$ Since $\cA^+$ is saturated by stable leaves, the same happens for $\cR$ and so $\cW^s(x)=\cW^{cs}(x)\subset\cR$  for any $x\in\cR.$ Therefore $\displaystyle{U=\bigcup_{z\in\cR}\cW^{uu}_\varepsilon(z)}$ is a neighborhood of $\cR$ satisfying $\overline{f^{-1}(U)}\subset U$ and $\displaystyle{\bigcap_{n\ge 0}f^{-n}(U)=\cR.}$

\end{proof}

\begin{definition}\label{d.potp}
Let $f:M\to M$ be a homeomorphism on a metric space $M$ and let $K$ be an invariant set.  We say that $f$ has the \textit{Pseudo Orbit Tracing Property (POTP)} on $K$ if given $\beta>0$ there exists $\alpha>0$ such that any $\alpha$-pseudo orbit $\{x_n\}_{n\in\Z}\subset K$ (i. e., $dist(f(x_n),x_{n+1})<\alpha$ for all $n\in
Z$) is $\beta$-shadowed by an orbit $y\in K$, i.e., $dist(f^n(y),x_n)<\beta$ for all $n\in\Z.$
\end{definition}

The following lemma shows that the classical \textit{shadowing lemma} holds in $\cA\cup\cR.$ This lemma and Lemma \ref{l.sstable} follow classical arguments of hyperbolic dynamics applied to ``topological hyperbolic sets'',  see  also \cite{R}. We include a proof for the sake of completeness.

\begin{lemma}\label{l.potp}
Let $f:M\to M$ be as in our hypothesis  and let $\cA$ and $\cR$ be as before. Then, $f$ has the POTP on $\cA\cup\cR.$
\end{lemma}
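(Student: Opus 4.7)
The plan is to reduce POTP on $\cA \cup \cR$ to POTP on each piece separately, and then on each piece invoke the classical shadowing argument, which requires only a continuous local product structure, uniform (topological) contraction/expansion on the invariant plaques, and expansiveness.

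First, by Lemma~\ref{l.direct}(1) we have $\cR \subset \cA^+$ and $\cA \subset \cA^-$, so $\cA \cap \cR = \emptyset$, and both sets are compact by Lemma~\ref{l.atrep}. Hence $d_0 := \mathrm{dist}(\cA, \cR) > 0$, and if $\alpha < d_0/2$ no $\alpha$-pseudo orbit in $\cA\cup\cR$ can jump between the two sets. Thus any such pseudo orbit lies entirely in $\cA$ or entirely in $\cR$, and it suffices to prove POTP on each piece. I would write the argument on $\cR$; the case of $\cA$ is symmetric under reversing time and exchanging stable/unstable.

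Next I would establish the topological hyperbolic structure of $\cR$. By Lemma~\ref{l.atrep}, for every $x\in\cR$ one has $\cW^{cs}(x)=\cW^s(x)\subset \cR$, while the strong unstable leaves $\cW^{uu}$ are transverse to $\cW^{cs}$ and complementary. By continuity of both foliations and compactness of $\cR$, there exist $\varepsilon_0,\delta_0>0$ such that for $x,y\in\cR$ with $d(x,y)<\delta_0$ the intersection $[x,y]:=\cW^{cs}_{\varepsilon_0}(x)\cap \cW^{uu}_{\varepsilon_0}(y)$ is a single point depending continuously on $(x,y)$. Since $\cW^{cs}_{\varepsilon_0}(x)\subset\cW^{cs}(x)\subset\cR$, the bracket $[x,y]$ lies in $\cR$; this is the local product structure. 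For uniform topological contraction on $\cW^{cs}$-plaques over $\cR$, I would combine the exponential contraction of $\cW^{ss}$ (from partial hyperbolicity) with the fact that every center arc $I^c$ through a point of $\cR$ satisfies $|f^n(I^c)|\to 0$ (Proposition~\ref{p.options}, using $\cR\subset\cA^+$). Pointwise contraction upgrades to uniform contraction on $\cR$ by a contradiction-and-compactness argument: a failure of uniformity would yield, by passing to subsequences and using the last sentence of Proposition~\ref{p.options}, a limit center arc whose forward iterates do not shrink, contradicting $\cR\subset\cA^+$. Conclusion: for every $\eta>0$ there is $N\in\N$ with $f^n(\cW^{cs}_{\varepsilon_0}(x))\subset \cW^{cs}_\eta(f^n(x))$ for all $n\geq N$ and $x\in\cR$; uniform backward expansion on $\cW^{uu}$ is immediate from partial hyperbolicity.

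With these ingredients the standard shadowing argument proceeds as follows. Given $\beta>0$, shrink so that $\beta<\varepsilon_0$ and $\beta$ is less than the expansivity constant; pick $N$ so that $f^{\pm N}$ contracts the relevant $\varepsilon_0$-plaques to radius $\beta/2$; and choose $\alpha<\delta_0$ small enough that for any $\alpha$-pseudo orbit $\{x_n\}\subset\cR$ the brackets $[x_n,x_{n+1}]$ are defined and within $\beta/2$ of $x_{n+1}$. Then the compact sets $\Lambda_k := \bigcap_{|n|\leq k} f^{-n}\bigl(\overline{B_\beta(x_n)}\cap\cR\bigr)$ are nonempty (by iterating the bracket construction inside $\cR$), so their intersection is nonempty, and expansiveness forces it to be a single orbit which $\beta$-shadows $\{x_n\}$.

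The main obstacle I expect is the passage from pointwise to uniform topological contraction on $\cR$ (and symmetrically on $\cA$): classical shadowing is usually stated with uniform exponential contraction, but here we have only continuity plus Proposition~\ref{p.options}. The limit-arc statement at the end of that proposition is precisely the tool that makes the compactness argument go through; once uniform contraction (with arbitrary target $\eta$ after finite time $N$) is in hand, the remainder is routine bookkeeping in the classical proof of the shadowing lemma for topologically hyperbolic sets.
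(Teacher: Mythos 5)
Your proposal follows the same overall architecture as the paper's proof: both are the classical shadowing argument for topologically hyperbolic sets, resting on (i) the local product structure coming from dynamical coherence together with $\cW^{cs}(x)=\cW^s(x)\subset\cR$ for $x\in\cR$ (resp.\ $\cW^{cu}(x)=\cW^u(x)\subset\cA$), and (ii) a \emph{uniform} eventual contraction of the $\varepsilon$-plaques of $\cW^{cs}$ over $\cR$ under $f^N$ (resp.\ of $\cW^{cu}$ over $\cA$ under $f^{-N}$). Your preliminary observation that an $\alpha$-pseudo orbit with $\alpha<dist(\cA,\cR)$ cannot pass from one compactum to the other is correct and is actually needed to get POTP on the union rather than on each piece separately; the paper leaves it implicit. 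Your packaging of the final step via nested compact sets is equivalent to the paper's explicit induction on bracket points.

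The one step that does not work as you describe it is the upgrade from pointwise to uniform contraction in the center direction. If uniformity fails you obtain $x_n\in\cR$, center arcs $I^c_n\ni x_n$ with $|I^c_n|\le\varepsilon_0$, and times $m_n\to\infty$ with $|f^{m_n}(I^c_n)|>\eta$; but the limit arc $J^c=\lim_n I^c_n$ passes through a point of $\cR$, is therefore contained in $\cA^+$ (cover it by finitely many arcs with bounded forward iterates), and so by Lemma \ref{l.lim0} its forward iterates \emph{do} shrink. No contradiction with $\cR\subset\cA^+$ appears, and the last sentence of Proposition \ref{p.options} points the wrong way for your purpose: it converts shrinking approximants into a shrinking limit, not a failure of uniformity into a non-shrinking limit. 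The contradiction has to be extracted at \emph{intermediate} times via expansiveness: pass to maximal sub-arcs whose iterates stay below the expansivity constant throughout the window $[0,m_n]$, locate the first time the bound is attained, check that this time and its distance to $m_n$ both go to infinity, and take a limit arc all of whose iterates, positive and negative, stay below the expansivity constant --- exactly the mechanism of Lemmas \ref{l.inf} and \ref{l.borde}. This is also how the paper handles the step, and it does so more economically: it works directly with pairs of points in a whole $cs$-plaque (no splitting into strong stable and center directions), so that a failure of uniform contraction yields, after passing to subsequences, two distinct points whose entire orbits remain $\varepsilon$-close, contradicting expansiveness. Replace your compactness step by that argument and the rest of your proposal goes through.
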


\begin{proof}
The proof follows the classical lines of the proof of the shadowing lemma for hyperbolic sets. We just give an outline of it. We know that $\cW^{cs}(x)=\cW^s(x)$ for $x\in \cR$ and $\cW^{cu}(x)=\cW^u(x)$ for $x\in \cA.$ Given $\varepsilon>0$ (say less than the expansivity constant) there exists $N>0$ such that
\begin{itemize}
\item If $x\in\cA$ then $f^{-N}(\cW^{cu}_\varepsilon(x))\subset \cW^{cu}_{\varepsilon/2}(f^{-N}(x)).$
\item If $x\in\cR$ then $f^{N}(\cW^{cs}_\varepsilon(x))\subset \cW^{cs}_{\varepsilon/2}(f^{N}(x)).$
\end{itemize}
The existence of such $N$ is an immediate consequence of expansiveness.
Indeed, if we assume that such an $N$ does not exist, then for any $n$ there exist sequences $x_n\in \cA$ and $y_n\in\cW^{cu}_\varepsilon(x_n)$ such that  $f^{-n}(y_n)\notin \cW^{cu}_{\varepsilon/2}(f^{-n}(x)).$ Taking convergent subsequences from $\{f^{-n}(x_n)\}$ and $\{f^{-n}(y_n)\}$ we find points $z,w$ such that $\varepsilon\geq dist(z,w)\geq \varepsilon/2$ and for every $k\in\Z$, $dist(f^k(z),f^k(w))\leq \varepsilon$ contradicting expansiveness. A similar argument holds for $x\in\cR$.

Taking this into account, one can prove the shadowing lemma in the same lines as in the hyperbolic case. We will do it for the set $\cA,$ for the set $\cR$ is completely similar.

Given $\beta_1>0$ choose $2\varepsilon <\beta_1$ and $\alpha_1>0$ such that if $x,y\in\cA$ with $dist(x,y)<\alpha_1$ then $\cW^{cu}_{\varepsilon/2}(z)\cap \cW^{ss}_\varepsilon(y)\neq\emptyset$ for any $z\in \cW^{ss}_{\lambda\varepsilon}(x).$ Let $\{x_n\}_{n\ge 0}$ be an $\alpha_1$-pseudo orbit in $\cA$ for $f^N.$ Define $z_0:=x_0$ and by induction
$$z_i:=\cW^{cu}_{\varepsilon/2}(f^N(z_{i-1}))\cap \cW^{ss}_\varepsilon(x_i).$$
By induction one proves that $$f^{-kN}(z_n)\in \cW^{cu}_{\varepsilon/2}(z_{n-k}).$$ This is trivial for $k=0$. Now, for $k\ge 0$ we have that $f^{-kN}(z_n)\in \cW^{cu}_{\varepsilon}(f^N(z_{n-k-1}))$ and so the statement remains valid for $k+1.$ Therefore, for $y_n=f^{-nN}(z_n)$ we have that $dist(f^{kN}(y_n),x_k)<2\varepsilon$ for $0\le k\le n.$ An accumulation point $y$ of $y_n$ will satisfy that $dist(f^{jN}(y),x_j)\le 2\varepsilon$ for $j\ge 0.$ From this, one concludes that any $\alpha_1$-pseudo orbit for $f^N$ in $\cA$ is $\beta_1$-shadowed by a true orbit of $f^N$ in $\cA$ and yields the  shadowing  for $f.$

\end{proof}

Observe that if $f$ is topologically Anosov then it is expansive and the previous proof applies and we have the following:
\begin{corollary}\label{c.anosovpotp}
Let $f:M\to M$ be topologically Anosov. Then, it has the POTP on $M.$
\end{corollary}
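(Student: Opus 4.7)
The plan is to recycle the proof of Lemma~\ref{l.potp} verbatim, observing that a topologically Anosov diffeomorphism provides globally on $M$ the two ingredients used on $\cA\cup\cR$ in that lemma: a pair of transversal invariant foliations $\cW^s,\cW^u$ of complementary dimension whose leaves coincide with the stable/unstable sets, together with expansivity.

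First I would verify that $f$ is expansive, as asserted in the paragraph preceding the corollary. Using the transversality of $\cW^s,\cW^u$ and compactness of $M$, there is a uniform $\alpha_0>0$ such that $\cW^s_\varepsilon(x)\cap\cW^u_\varepsilon(y)$ reduces to a single point whenever $dist(x,y)<\alpha_0$. A standard local product argument then shows that if the full $\Z$-orbit of $y$ remains $\alpha_0$-close to that of $x$, then $y\in\cW^s(x)\cap\cW^u(x)=\{x\}$.

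Next I would upgrade expansivity to uniform local contraction: exactly as in Lemma~\ref{l.potp}, compactness together with a diagonal extraction yields $N>0$ and $\varepsilon>0$ such that for every $x\in M$
$$f^N(\cW^s_\varepsilon(x))\subset\cW^s_{\varepsilon/2}(f^N(x)),\qquad f^{-N}(\cW^u_\varepsilon(x))\subset\cW^u_{\varepsilon/2}(f^{-N}(x)).$$
Failure of either inclusion would produce two points at a fixed positive distance whose entire orbit remains in an $\varepsilon$-tube, contradicting expansivity.

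Finally I would copy the shadowing construction from the proof of Lemma~\ref{l.potp}: given $\beta>0$, pick $2\varepsilon<\beta$ and $\alpha>0$ so that $\cW^u_{\varepsilon/2}(x)\cap\cW^s_\varepsilon(y)\neq\emptyset$ whenever $dist(x,y)<\alpha$; for an $\alpha$-pseudo orbit $\{x_n\}$ of $f^N$, set $z_0=x_0$ and inductively $z_i=\cW^u_{\varepsilon/2}(f^N(z_{i-1}))\cap\cW^s_\varepsilon(x_i)$; then $y_n:=f^{-nN}(z_n)$ $2\varepsilon$-shadows $\{x_0,\dots,x_n\}$, and any accumulation point $y$ provides a true shadow, which yields the POTP for $f$. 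The main obstacle is securing the uniform local product structure from the merely continuous foliations $\cW^s,\cW^u$, but this is a standard consequence of transversality of complementary dimensions together with compactness of $M$.
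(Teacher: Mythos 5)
Your proposal is correct and follows essentially the same route as the paper, which disposes of this corollary in one sentence: a topologically Anosov diffeomorphism is expansive and carries the global transversal foliations $\cW^s,\cW^u$, so the proof of Lemma~\ref{l.potp} applies verbatim on all of $M$ with $\cW^s,\cW^u$ in place of $\cW^{cs},\cW^{uu}$ (resp.\ $\cW^{cu},\cW^{ss}$). Your write-up in fact supplies slightly more detail than the paper does, namely a sketch of why the foliation definition of topologically Anosov forces expansivity, a point the paper simply asserts.
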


\begin{remark}\label{r.index} The sets $\cA$ and $\cR$ behave as \textit{hyperbolic sets}. Indeed, the set $\Omega(f)\cap\cA\cup\cR$ admits a \textit{spectral decomposition} as in the Axioma A case, with dense set of periodic orbits and local product structure, see \cite[Theorem 3.4.4]{AH}. Moreover, the attracting set $\cA$ must contain a topologically hyperbolic transitive attractor, and $\cR$ must contain a topologically hyperbolic transitive repeller.
 The periodic points in $\cR$ are topologically hyperbolic with stable index $dim (E^{ss}\oplus E^c)$ and the periodic points in $\cA$ have stable index $dim E^{ss}.$
\end{remark}

The important fact we need regarding the above is the following:

\begin{lemma}\label{l.sstable}
 Let $x\in M.$
  \begin{enumerate}
  \item If $\omega(x)\subset \cA$ then, there exists $z\in\cA$ such that $x\in\cW^{ss}(z).$
   \item If $\alpha(x)\subset \cR$ then, there exists $y\in\cR$ such that $x\in\cW^{uu}(y).$
\end{enumerate}
  \end{lemma}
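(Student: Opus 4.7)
The plan is to realize both assertions as purely local consequences of the product structure between the strong and weak foliations, together with the fact, proved in Lemma \ref{l.atrep}, that $\cW^{cu}(y)\subset\cA$ for every $y\in\cA$ and $\cW^{cs}(y)\subset\cR$ for every $y\in\cR$. I will prove (1) in detail and obtain (2) by applying the argument to $f^{-1}$, which exchanges the triples $(\cA,\cW^{cu},\cW^{ss})$ and $(\cR,\cW^{cs},\cW^{uu})$.

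The preliminary ingredient is a uniform local bracket. Since $TM=E^{ss}\oplus E^{cu}$, partial hyperbolicity together with dynamical coherence gives that $\cW^{ss}$ and $\cW^{cu}$ are continuous invariant foliations which are everywhere transverse. By compactness of $M$ and uniform continuity, there exist $\varepsilon_0,\varepsilon_1>0$ such that for any $u,v\in M$ with $dist(u,v)<\varepsilon_0$, the intersection $\cW^{cu}_{\varepsilon_1}(u)\cap\cW^{ss}_{\varepsilon_1}(v)$ consists of a single point that depends continuously on $(u,v)$. This is the only technical point I would write out carefully.

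For statement (1), since $\omega(x)\subset\cA$ and $\cA$ is compact (Lemma \ref{l.atrep}), for all sufficiently large $n$ there exists $p_n\in\cA$ with $dist(f^n(x),p_n)<\varepsilon_0$. Let $q_n:=\cW^{cu}_{\varepsilon_1}(p_n)\cap\cW^{ss}_{\varepsilon_1}(f^n(x))$. Because $q_n$ lies on a local piece of $\cW^{cu}(p_n)$, and by Lemma \ref{l.atrep} the whole leaf $\cW^{cu}(p_n)$ is contained in $\cA$, we obtain $q_n\in\cA$. On the other hand $f^n(x)\in\cW^{ss}(q_n)$, so setting $z:=f^{-n}(q_n)$ we have $z\in\cA$ by invariance of $\cA$, and $x\in\cW^{ss}(z)$ as required. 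Statement (2) is then obtained verbatim after replacing $f$ with $f^{-1}$.

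The main obstacle is really only making sure that the local product boxes have size bounded below uniformly on $M$, which is a standard consequence of continuity and transversality of the two foliations on the compact manifold; once this is in hand, everything reduces to invoking Lemma \ref{l.atrep} at the chosen point $p_n$. I note that the argument is independent of the shadowing Lemma \ref{l.potp}: trying to use POTP directly produces a shadowing orbit $z\in\cA$ with $dist(f^n(x),f^n(z))$ small but not necessarily going to zero, and one would then need precisely the same local bracket to upgrade closeness to strong-stable equivalence, so the direct local approach is more efficient.
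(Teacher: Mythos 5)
Your argument is correct, and it takes a genuinely different (and shorter) route than the paper. The paper proves this lemma via the shadowing property established in Lemma \ref{l.potp}: it builds an $\alpha$-pseudo orbit in $\cA$ that tracks $\{f^n(x)\}_{n\ge n_0}$, shadows it by a true orbit of some $z\in\cA$, and only then invokes expansiveness together with the local product structure of $\cW^{ss}$ and $\cW^{cu}$ to upgrade ``forward orbits stay $2\beta$-close'' to $f^{n_0}(x)\in\cW^{ss}(z)$. You observe that the shadowing step is dispensable here: since Lemma \ref{l.atrep} gives that $\cA$ is invariant and saturated by $\cW^{cu}$-leaves, a single application of the uniform local bracket $[p_n,f^n(x)]=\cW^{cu}_{\varepsilon_1}(p_n)\cap\cW^{ss}_{\varepsilon_1}(f^n(x))$ at one time $n$ with $f^n(x)$ near $\cA$ already lands you on a point $q_n\in\cW^{cu}(p_n)\subset\cA$ lying on the strong stable leaf of $f^n(x)$, and pulling back by $f^{-n}$ finishes. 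The uniform local product structure you isolate as the one technical point is exactly what the paper itself asserts and uses (in the proofs of Lemmas \ref{l.potp} and \ref{l.potpM}), so nothing new is being smuggled in, and the reduction of item (2) to item (1) via $f^{-1}$ is the same symmetry the paper invokes. Your version actually yields a slightly stronger statement (any point whose orbit enters a fixed neighborhood of $\cA$ lies on $\cW^{ss}(z)$ for some $z\in\cA$), which is the standard description of the basin of a $u$-saturated attractor; what the paper's route buys in exchange is uniformity of method, since the shadowing lemma is needed anyway for Remark \ref{r.index} and the spectral-decomposition facts quoted there.
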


\begin{proof}
The proof of this lemma as well as the previous Remark \ref{r.index} , relies on Lemma \ref{l.potp}, i.e., the shadowing lemma for the sets $\cA$ and $\cR$.
Now the classical argument in hyperbolic theory applies to our lemma. Let $\beta$ be small an less than half the expansivity constant and take $\alpha$ such that any $\alpha$-pseudo orbit in $\cA$ is $\beta$-shadowed by an orbit in $\cA.$ Let $x$ be such that $\omega(x)\subset \cA.$ Let $n_0$ be such that for $n\ge n_0$ one can find $x_n\in \cA$ such that $\{x_n\}_{n\ge n_0}$ is an $\alpha$-pseudo orbit in $\cA$ and $dist (f^n(x),x_n)<\beta.$ Then, there is $z\in A$ such that $dist(f^{n_0+j}(x), f^j(z))<2\beta.$ By expansivity and the local product structure of the foliations $\cW^{ss}, \cW^{cu}$ we conclude that $f^{n_0}(x)\in\cW^{ss}(z)$ and hence $x\in\cW^{ ss}(f^{-n_0}(z)).$ The second item is analogous.

\end{proof}

\begin{lemma}\label{l.omega}
 Let $x\in M.$ Then there exists $y$ and $z$ in $\cW^c(x)$ such that $\alpha(z)\cup\omega(y)\subset\cA\cup\cR.$ \end{lemma}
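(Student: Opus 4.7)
My plan is to find $y \in \cW^c(x)$ with $\omega(y) \subset \cA \cup \cR$; the existence of $z \in \cW^c(x)$ with $\alpha(z) \subset \cA \cup \cR$ then follows by the symmetric argument applied to $f^{-1}$, which interchanges $\cA^+$ with $\cA^-$ (and hence $\cA$ with $\cR$) and $\omega$ with $\alpha$.

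First I dispose of two easy sub-cases. If $\cW^c(x) \cap \cA^- \neq \emptyset$, pick any $y$ in this intersection: by the argument symmetric to the ``$x \in \cA^+ \Rightarrow \alpha(x) \subset \cR$'' paragraph inside the proof of Lemma \ref{l.atrep} (now with $|f^n(I^c)| \to 0$ as $n \to +\infty$), we have $\omega(y) \subset \cA$. If instead $\cW^c(x) \subset \cA^+$, then $x \in \cR$ by the definition of $\cR$ and $\omega(x) \subset \cR$ because $\cR$ is closed and $f$-invariant (Lemma \ref{l.atrep}); take $y = x$.

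The substantive case is $\cW^c(x) \cap \cA^- = \emptyset$ and $\cW^c(x) \not\subset \cA^+$, so $\cW^c(x)$ contains points of $M \setminus (\cA^+ \cup \cA^-)$. My plan is to produce $a \in \cW^c(x)$ with $\omega(a) \cap \cA \neq \emptyset$; by the attracting property of $\cA$ (Lemma \ref{l.atrep}) this upgrades automatically to $\omega(a) \subset \cA$, and we set $y := a$. I would pick a connected component $C = (a,b)$ of $\cA^+ \cap \cW^c(x)$ in the intrinsic topology of the leaf (if $\cA^+ \cap \cW^c(x) = \emptyset$, so $\cW^c(x) \subset M \setminus (\cA^+ \cup \cA^-)$, argue similarly starting from $x$ itself). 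A finite-cover/subadditivity argument as in the proof of Lemma \ref{l.atrep} shows $|f^n([a,b])| \to 0$. Neither endpoint can lie in $\cA^+$ (maximality of $C$) nor in $\cA^-$ (case hypothesis), so $a, b \in M \setminus (\cA^+ \cup \cA^-)$, and $\omega(a) = \omega(b)$ since $|f^n([a,b])| \to 0$. Passing to a subsequence with $f^{n_k}(a) \to p$, I would combine the shrinking option-(1) arc $f^{n_k}([a,b])$ with the extension on the far side of $f^{n_k}(a)$ in $\cW^c(f^{n_k}(a))$ (an option-(3) arc contained in $M \setminus (\cA^+ \cup \cA^-)$ whose length grows to infinity) to form bounded-length center arcs $I^c_k$ straddling $f^{n_k}(a)$. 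These converge to a non-degenerate arc $J^c$ through $p$, and applying the moreover part of Proposition \ref{p.options} would then force $J^c$ to have option~(2), i.e.\ $p \in \cA^-$. Applying the easy case to $p$ gives $\omega(p) \subset \cA$, and since $\omega(p) \subset \omega(a)$ by invariance of $\omega(a)$, this yields $\omega(a) \cap \cA \neq \emptyset$, as desired.

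The principal obstacle is the last moreover step: because $|f^n([a,b])| \to 0$, the natural option-(1) arcs around $f^{n_k}(a)$ are degenerate in the limit, so I must thicken them into non-degenerate $I^c_k$ using the extension into $M \setminus (\cA^+ \cup \cA^-)$ while still arranging the hypothesis of the moreover part (``$|f^m(I^c_k)| \to 0$ for each $k$, in some direction $m \to \pm\infty$''). This will require a careful choice of sub-arcs and time scales, in the same spirit as the convergence argument used in the proof of Lemma \ref{l.vacio}, and is the technical heart of the lemma.
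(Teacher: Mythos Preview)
Your two ``easy'' sub-cases are fine in spirit (though note: for $y\in\cA^-$ the witnessing arc satisfies $|f^n(I^c)|\to 0$ as $n\to -\infty$, not $+\infty$; the forward growth then comes from Proposition~\ref{p.options}). The substantive case~3, however, has genuine gaps that your plan does not close.

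First, the claim $|f^n([a,b])|\to 0$ is not a finite-cover/subadditivity consequence: the endpoints $a,b$ lie outside $\cA^+$, so no shrinking arc centered in $\cA^+$ covers them, and it is entirely possible that $|f^n([a,a'])|\to\infty$ for every $a'\in(a,b)$ while $|f^n([a',b'])|\to 0$ on compact sub-arcs. Second, and more seriously, your boundary point $a$ need \emph{not} have the property you implicitly want. The one-sided arc $[a,c]$ into the component may well shrink forward (option~(1)), so arcs through $a$ do not all grow forward; the growth may live entirely on the other side of $a$. This blocks any direct use of the ``moreover'' clause of Proposition~\ref{p.options}: you cannot thicken the degenerate limit with an option-(3) arc and still have each $I^c_k$ shrink in some time direction --- the option-(3) piece makes both directions blow up. Your last paragraph correctly identifies this as the obstacle, but the plan as stated does not overcome it.

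The paper's argument avoids this trap by a different dichotomy. It asks whether there is $y\in\cW^c(x)$ such that \emph{every} center arc through $y$ (even with $y$ as endpoint) grows forward. If yes, a direct partitioning argument --- not the ``moreover'' clause --- shows that every $z\in\omega(y)$ has $\cW^c(z)\subset\cA^-$, hence $\omega(y)\subset\cA$. If no such $y$ exists, the paper proves $\cW^c(x)\subset\cA^+$ (so $x\in\cR$); this step is where the real work hides, and it crucially uses Lemma~\ref{l.sstable} to rule out a maximal forward-shrinking ``island'' $L^c$ inside a forward-growing arc by forcing $L^c$ into a single strong stable leaf. Your outline never invokes Lemma~\ref{l.sstable}, and without it (or an equivalent mechanism) the argument cannot be completed along the lines you propose.
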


\begin{proof}
 We will just prove that for any $x$ there exists $y\in\cW^c(x)$ such that $\omega(y)\subset\cA\cup\cR.$ The existence of $z$ is proven along the same lines. Assume that
\begin{equation}\label{e.point}
\exists \,y\in W^c(x) \text{ s.t.  for every center arc } J^c_y, y\in J^c_y, \text{ it holds that }   |f^n(J^c_y)|\to_n \infty.
\end{equation}

It follows that if  $z\in\omega(y)$ then $\cW^c(z)\subset \cA^-$ and so $z\in\cA.$ Indeed, let $J^c_z$ be a closed center arc with $z$ as an endpoint and set $K=|J^c_z|.$ We can find a sequence of center arcs $I^c_n$ containing $y$ with $|I^c_n|\to 0$ and a sequence $m_n\to\infty$ such that $f^{m_n}(I^c_n)\to J^c_z.$ We claim that there exists $M$ such that $|f^j(I^c_n)|\le M$ for $0\le j\le m_n$  and any $n.$ If the claim isn't true, taking subsequences if necessary, there exists $k_n, 0\le k_n\le m_n$ such that $|f^{k_n}(I^c_n)|\to\infty.$\footnote{Compare with the proof of the last sentence of Proposition \ref{p.options}.} Consider a partition of $I^c_n$ into sub-arcs $I^c_n=I^c_{n,1}\cup....\cup I^c_{n,\ell_n}$ satisfying
\begin{itemize}
\item $|f^j(I^c_{n,t})|\le K$ for $0\le j\le m_n$ and $1\le t\le \ell_n.$
\item  For any $1\le t\le \ell_n-1$ there exists $s_t, 0\le s_t\le m_n$ such that $|f^{s_t}(I^c_{n,t})|=K.$
\end{itemize}
It is clear that $\ell_n\to\infty$. In particular there exists $t_n, 1\le t_n\le \ell_n-1$ such that $|f^{m_n}(I^c_{n,t_n})|\to 0.$ Hence $m_n-s_{t_n}\to\infty$ and $s_{t_n}\to\infty$ as well. Taking a convergent subsequence of the center arcs $f^{s_{t_n}}(I^c_{n,t_n})$ we find a center arc $I^c$ such that $|f^j(I^c)|\le K$ for any $j\in \Z$, contradicting Proposition \ref{p.options}. This proves our claim. We conclude then that $|f^{-j}(J^c_z)|\le M$ for any $j\ge 0$  that is, the length of $J^c_z$   backwardly iterated remains bounded. Now, since any center arc containing $z$ can be subdivided into two center arcs, both having $z$ as an endpoint, by the argument above  we get that $\cW(z)\subset \cA^-$ and $z\in \cA.$ In other words, $\omega(y)\subset \cA.$

On the other hand, assume that such a point $y$ as in \eqref{e.point}  does not exist. We claim 
in this case that for any (closed) center arc $I^c\subset\cW^c(x)$ it holds that $|f^n(I^c)|$ is bounded for $n\ge 0.$ If this is true, then we have that $\cW^c(x)\subset \cA^+$ implying that $x\in\cR$ and thus $\omega(y)\subset\cR$ for any $y\in\cW^c(x).$

We proceed to prove the claim. By contradiction assume that there is a closed center arc $I^c$ such that $|f^n(I^c)|\to\infty.$ Let $p_i, i=1,2$ be the endpoints of $I^c$. For each endpoint $p_i$ of $I^c$ consider, if exists, a maximal center arc $I_i\subset I^c$ containing $p_i$ such that $|f^n(I_i)|$ remains bounded. Let $J^c$ be the closure of $I^c\backslash (I_1\cup I_2).$ This is a nontrivial center arc, the length of its future iterates is unbounded, and the same is true for any sub-arc containing an endpoint of $J^c.$ Indeed, the same holds for any sub-arc of $J^c.$ Otherwise, if there exists a center arc $L^c\subset J^c$ such that $|f^n(L^c)|$ is bounded for $n\ge 0,$ we may consider $L^c$ as a maximal one with this property. It cannot contain any endpoint of $J^c.$ By Proposition \ref{p.options} we get that $|f^n(L^c)|\to 0.$  Observe that for any $x,y\in L^c$ it holds that $\omega(x)=\omega(y)$, in particular for the endpoints of $L^c.$ Let $z\in\omega(x), \,x\in L^c.$ By the same argument as in the first part (since $z$ is in the $\omega$-limit set of both boundary points of $L^c$) we conclude that $z\in \cA.$ Therefore, by Lemma \ref{l.sstable}, we get that the whole arc $L^c$ belongs to a single strong stable leaf, which is impossible. Thus, for any sub-arc of $J^c$ the length of its future iterates is unbounded. Then, any $y\in int(J^c)$ satisfies \eqref{e.point}, a contradiction. This proves our claim and our lemma.

\end{proof}

The lemma above implies that the center-unstable leaves of $\cA$ and $\cR$ cover all the manifold. The same holds for center stable leaves:

\begin{corollary}\label{c.cover}
 Let $f:M\to M$ be an expansive partially hyperbolic diffeomorphism with one-dimensional center and dynamically coherent. Let $\cA$ and $\cR$ be the attracting an repelling sets defined above. Then,
 $$\bigcup_{x\in\cA\cup\cR} \cW^{cs}(x)=M,\;\;\; \bigcup_{x\in\cA\cup\cR} \cW^{cu}(x)=M.$$
\end{corollary}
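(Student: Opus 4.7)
The plan is to combine Lemma~\ref{l.omega} with Lemma~\ref{l.sstable} and the attracting/repelling structure of $\cA$ and $\cR$ from Lemma~\ref{l.atrep}. Given $x\in M$, Lemma~\ref{l.omega} supplies $y,z\in\cW^c(x)$ with $\omega(y)\subset\cA\cup\cR$ and $\alpha(z)\subset\cA\cup\cR$. I will use $y$ to produce $w\in\cA\cup\cR$ with $x\in\cW^{cs}(w)$; the analogous statement for $\cW^{cu}$ follows symmetrically from $z$ together with item~(2) of Lemma~\ref{l.sstable}.

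The first step will be a dichotomy: either $\omega(y)\subset\cA$ or $\omega(y)\subset\cR$. By Lemma~\ref{l.direct}(1) the sets $\cA$ and $\cR$ are disjoint, and by Lemma~\ref{l.atrep} I can choose disjoint open neighborhoods $U\supset\cA$ and $V\supset\cR$ with $\overline{f(U)}\subset U$, $\bigcap_{n\ge 0}f^n(U)=\cA$, $\overline{f^{-1}(V)}\subset V$, and $\bigcap_{n\ge 0}f^{-n}(V)=\cR$. If some forward iterate of $y$ enters $U$, the forward invariance of $\overline{U}$ traps the entire tail of the orbit, hence $\omega(y)\subset\overline{U}\subset M\setminus\cR$; the argument with $V$ in place of $U$ is symmetric.

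Next I handle each branch. In the case $\omega(y)\subset\cA$, Lemma~\ref{l.sstable}(1) yields $w\in\cA$ with $y\in\cW^{ss}(w)\subset\cW^{cs}(w)$; hence $\cW^{cs}(y)=\cW^{cs}(w)$ and therefore $x\in\cW^c(y)\subset\cW^{cs}(w)$. In the case $\omega(y)\subset\cR$, the eventual trapping $f^n(y)\in V$ for $n\ge n_0$, combined with $\cR=\bigcap_{k\ge 0}f^{-k}(V)$, gives $f^{n_0}(y)\in\cR$, so by invariance $y\in\cR$ itself and I simply take $w=y$, obtaining $x\in\cW^c(y)\subset\cW^{cs}(y)=\cW^{cs}(w)$. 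Running the dual argument on $z$ with Lemma~\ref{l.sstable}(2) produces $w'\in\cA\cup\cR$ with $x\in\cW^{cu}(w')$, which delivers both covering equalities.

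The substantive content is already packaged into Lemmas~\ref{l.omega}, \ref{l.sstable}, and \ref{l.atrep}, so what remains is routine. The only part that needs some care is the dichotomy: one must choose the trapping neighborhoods $U$ and $V$ disjoint (using compactness of $\cA$ and $\cR$ together with Lemma~\ref{l.direct}(1)) and invoke the standard fact that an $\omega$-limit set contained in an attracting open set attracts the whole forward tail of the orbit. No new ideas beyond what has already appeared seem required.
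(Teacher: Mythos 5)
Your proof is correct and follows exactly the route the paper intends: the paper offers no written proof of Corollary~\ref{c.cover} beyond the sentence ``the lemma above implies\dots'', and the intended argument is precisely your combination of Lemma~\ref{l.omega} with Lemma~\ref{l.sstable} and the trapping neighborhoods of $\cA$ and $\cR$ from Lemma~\ref{l.atrep}. Your added details (the dichotomy $\omega(y)\subset\cA$ or $\omega(y)\subset\cR$ via disjoint trapping neighborhoods, and the observation that $\omega(y)\subset\cR$ forces $y\in\cR$ itself, which is the case Lemma~\ref{l.sstable} does not cover) are exactly the right ones to fill in.
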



We now turn our attention to the case when $f$ has the POTP on the whole $M.$\footnote{We don't need to use the method used by Hiraide \cite{H2} about \textit{generalized foliations} since our situation is much simpler and we can give a simple proof.}

\begin{lemma}\label{l.potpM}
Assume that $f$ has the POPT on $M.$ Then $M=\cA\cup \cR.$
\end{lemma}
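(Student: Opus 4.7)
The plan is to argue by contradiction: suppose there exists $x\in M\setminus(\cA\cup\cR)$ and use the POTP hypothesis to derive a contradiction. The first step is to globalize the conclusion of Lemma~\ref{l.omega} by proving $\Omega(f)\subset\cA\cup\cR$. POTP together with expansivity gives $\Omega(f)=\overline{\mathrm{Per}(f)}$ and forces every periodic point of $f$ to be topologically hyperbolic; since $f$ is dynamically coherent and $\dim E^c=1$, the only possible local stable dimensions at such a periodic point are $\dim E^{ss}$ and $\dim E^{ss}+1$. By the characterization in Remark~\ref{r.index}, a topologically hyperbolic periodic point with stable index $\dim E^{ss}$ must lie in $\cA$ and one with stable index $\dim E^{ss}+1$ in $\cR$; hence every periodic point lies in the closed set $\cA\cup\cR$ and $\Omega(f)\subset\cA\cup\cR$.

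The repelling neighborhood $U\supset\cR$ furnished by Lemma~\ref{l.atrep} (satisfying $\overline{f^{-1}(U)}\subset U$ and $\bigcap_{n\ge 0}f^{-n}(U)=\cR$) then acts as a trap: if $\omega(x)$ met $\cR$, forward-invariance of $f^{-1}(U)$ would place the entire orbit of $x$ inside $U$ and force $x\in\cR$. Hence $\omega(x)\cap\cR=\emptyset$, and combined with $\omega(x)\subset\Omega(f)\subset\cA\cup\cR$ this yields $\omega(x)\subset\cA$; dually $\alpha(x)\subset\cR$. Lemma~\ref{l.sstable} now gives $x_a\in\cA$ and $x_r\in\cR$ with $x\in\cW^{ss}(x_a)\cap\cW^{uu}(x_r)$, exhibiting $x$ as a heteroclinic point from $\cR$ to $\cA$.

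The final step is to rule out such a heteroclinic configuration, and this is where I expect the main difficulty to lie. I would reduce the problem to proving the implication $\omega(y)\subset\cA\Rightarrow y\in\cA$ in our setting: granted it, applying the dichotomy from the proof of Lemma~\ref{l.omega} to $x$ (which is not in $\cR$) produces $y\in\cW^c(x)$ with $\omega(y)\subset\cA$, so $y\in\cA$, so $\cW^c(x)=\cW^c(y)\subset\cA^-$, and finally $x\in\cA$---a contradiction. To prove the implication I would pick a hyperbolic periodic $p_a\in\cA$ close to a point of $\omega(y)$ (available by Remark~\ref{r.index}) and build an $\alpha$-pseudo orbit coinciding with the orbit of $y$ for $n\le N$ and with an iterate of $p_a$ for $n\ge N+1$, the gluing gap being smaller than $\alpha$ for $N$ large. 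The POTP on $M$ supplies a shadowing true orbit; expansivity forces this shadow to be $y$ itself, and propagating the shadowing estimate back into the attracting neighborhood $V\supset\cA$ of Lemma~\ref{l.atrep} should trap the entire orbit of $y$ inside $V$, giving $y\in\bigcap_{n\in\Z}f^{-n}(V)=\cA$. The essential obstacle here is this last trapping: upgrading the $\cW^{uu}$-saturation of $\cA^-$ supplied by Lemma~\ref{l.direct}(3) to a $\cW^{ss}$-saturation of $\cA$ is precisely the extra content that the POTP on all of $M$ (as opposed to just on $\cA\cup\cR$, already given by Lemma~\ref{l.potp}) should deliver.
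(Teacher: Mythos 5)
Your reduction of the problem to ruling out a ``heteroclinic'' point $x$ with $\alpha(x)\subset\cR$ and $\omega(x)\subset\cA$ is sensible (the trapping-region argument with $U$ is fine), although already the first step leans on an unproved converse: Remark \ref{r.index} asserts that periodic points \emph{in} $\cA$ (resp.\ $\cR$) have index $\dim E^{ss}$ (resp.\ $\dim E^{ss}+1$); it does not assert that every topologically hyperbolic periodic point of a given index lies in $\cA$ or $\cR$, and showing that a periodic point belongs to $\cA$ requires its \emph{entire} center leaf to lie in $\cA^-$, not just a small arc. But the decisive gap is the final step. The implication $\omega(y)\subset\cA\Rightarrow y\in\cA$ is essentially equivalent to the lemma itself, and the argument you sketch for it does not work: the pseudo-orbit obtained by gluing the forward orbit of $y$ to a periodic orbit in $\cA$ is shadowed by some $w$ that is $\beta$-close to the orbit of $y$ only for $n\le N$, so expansivity does \emph{not} force $w=y$ (it only places $f^N(w)$ in a local center-unstable set of $f^N(y)$); and all you can conclude about $w$ is that it lies in the basin of $\cA$ --- which every heteroclinic point already does, so nothing is gained. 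Trapping the full orbit of $y$ in a neighborhood of $\cA$ is hopeless since its backward orbit accumulates on $\cR$. The Franks--Robinson example (expansive, one attractor, one repeller, heteroclinic points in between, but without POTP) shows that this configuration cannot be excluded by soft shadowing-plus-expansivity reasoning alone; POTP must be used quantitatively along the one-dimensional center, which your final step never does.

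The paper's proof takes a different and more direct route: it shows that every $x\in M$ lies in $\cA^+\cup\cA^-$. Fixing a local orientation of $\cW^c_\varepsilon(x)$ and assuming $x\notin\cA^+$, one of the two half-arcs at $x$, say $I^{c,+}_\delta(x)$, has forward iterates of unbounded length; one then glues the \emph{backward} orbit of $x$ to the \emph{forward} orbit of a point $y\in I^{c,+}_\delta(x)$ with $\delta<\alpha$. The local product structure forces the shadowing point $z$ to lie on $\cW^c_\varepsilon(x)$ on the positive side, and the arc $[x,z]$ has bounded (hence, by Proposition \ref{p.options}, contracting) backward iterates. Repeating on the other side either produces a second such arc (giving $x\in\cA^-$) or leads to a pseudo-orbit that no point can shadow. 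Connectedness of center leaves then upgrades $x\in\cA^+\cup\cA^-$ to $x\in\cR\cup\cA$. If you want to salvage your outline, the missing ingredient is precisely this construction of backward-contracted \emph{center arcs} at $x$ via POTP, exploiting $\dim E^c=1$.
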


\begin{proof}
 We  start by noticing  that for $\varepsilon$ small we always have around each point $x\in M$ local product neighborhoods $U_\varepsilon(x)\cong D^{cs}_\varepsilon\times D^u_\varepsilon$ and $V_\varepsilon(x)\cong D^{s}_\varepsilon\times D^{cu}_\varepsilon$. This follows from the partially hyperbolic structure and the dynamical coherence. It is easy to see that if $dist(f^n(x), f^n(y))<\varepsilon$ for $n\ge 0$ then $y\in \cW^{cs}_\varepsilon(x)$ and if $dist(f^n(x), f^n(y))<\varepsilon$ for $n\le  0$ then $y\in \cW^{cu}_\varepsilon(x).$

Let $x\in M. $ We will prove first that either $x\in\cA^+$ or $x\in\cA^-.$
Assume that $x\notin \cA^+.$ We orient locally $\cW^c_\varepsilon(x)$ and identify it with $(-\varepsilon,\varepsilon),$  $x$ being $0.$ For $\delta<\varepsilon$ we denote $I^{c,+}_\delta(x)$ the arc $[0,\delta)$ in $\cW^c_\varepsilon(x)$ and by $I^{c,-}_\delta(x)$ the arc $(-\delta,0].$

Since $x\notin \cA^+$ then for any $\delta>0$ it holds
$$|f^n(I^{c,+}_\delta(x))|\to _{n\to +\infty}\infty \;  \mbox{ or } \; |f^n(I^{c,-}_\delta(x))|\to _{n\to +\infty}\infty.$$
Let us first assume $|f^n(I^{c,+}_\delta(x))|\to _{n\to +\infty}\infty$ and let $\beta>0$ be much smaller than $\varepsilon$ and consider the corresponding $\alpha>0$ from the POTP property.
Let $0<\delta<\alpha$ and let $y\in I^{c,+}_\delta(x).$ Consider the pseudo orbit $\{x_n\}_{n\in\Z}$ given by $x_n=f^n(x)$ for $n\le -1$ and $x_n=f^n(y)$ for $n\ge 0.$ By the POTP there is $z$ such that $dist(f^n(z),x_n)<\beta$ for all $n\in \Z.$ Note that $z\in \cW^{cs}_\varepsilon(y)$ and $z\in \cW^{cu}_\varepsilon (x)$ and so $z\in \cW^c_\varepsilon (x).$ Moreover, $z\in (0,\varepsilon)$ which implies that the arc $I^c=[x,z]\subset \cW^c_\varepsilon(x)$ satisfies $|f^{n}(I^c)|\to_{n\to-\infty} 0.$

If we also have that $|f^n(I^{c,-}_\delta(x))|\to _{n\to +\infty}\infty$ for any $\delta>0$, then, arguing as above we can conclude that $x\in\cA^-$ as we wish. Otherwise, for some $\delta>0$ we have that $|f^n(I^{c,-}_\delta(x))|\to_{n\to+\infty} 0.$ But this will led to a contradiction. Indeed, let $\beta$ be much smaller than $\varepsilon$, $\alpha>0$ corresponding to the POTP , $\delta<\alpha/2$ and $z,w$ such that $z\in I^{c,+}_\delta(x), w\in I^{c,-}_\delta(x)$ with
$$|f^n([x,z])|\to_{n\to-\infty} 0\;\;\text{ and }\;\;|f^n([w,x])|\to_{n\to+\infty} 0.$$
Let $\{x_n\}_{n\in\Z}$ be the $\alpha$ pseudo orbit by $x_n=f^{-n}(w)$ for $n\le -1$ and $x_n=f^n(z)$ for $n\ge 0.$ There must be a point $t$ that $\beta$ shadows $x_n.$ As before, $t$ must be in $\cW^{c}_\varepsilon(x).$ However,  $t\ge x$ cannot be, since in the  past will get closer to $f^{-n}(x)$ and far from $f^{-n}(w)$ (recall from Proposition \ref{p.options} that $|f^n([w,x])|\to_{n\to-\infty}\infty$). On the other hand $t\le x$ cannot be either, arguing with the future and $f^n(y).$ We have reached a contradiction.

This proves that the assumption that $x\notin \cA^+$  leads to $x\in \cA^-$
proving that for any $x\in M$ either $x\in \cA^+$ or $x\in\cA^-.$ By connectedness of $\cW^c$ we conclude that for each $x\in M$ either $\cW^c(x)\subset \cA^+$ or $\cW^c(x)\subset \cA^-.$ In other words, $x\in \cA\cup \cR.$
\end{proof}

\begin{corollary}\label{c.equivalent}
Let $f:M\to M$ be an expansive partially hyperbolic diffeomorphism with one-dimensional center and  dynamically coherent. Then
$$f \text{ is topologically Anosov} \iff f \text{ has the POTP on }M$$
\end{corollary}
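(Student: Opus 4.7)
The plan is to observe that the forward implication is immediate from Corollary \ref{c.anosovpotp}, so all the substance lies in the converse. Concretely, I assume that $f$ has the POTP on $M$ and aim to conclude that $f$ is topologically Anosov, by reducing to the already proven Corollary \ref{c.anosov}.

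First I would invoke Lemma \ref{l.potpM} to get the decomposition $M = \cA \cup \cR$. Next I would observe that this union is automatically disjoint: by definition $\cR \subset \cA^+$ and $\cA \subset \cA^-$, and item (1) of Lemma \ref{l.direct} guarantees $\cA^+ \cap \cA^- = \emptyset$. Combined with the compactness of $\cA$ and $\cR$ given by Lemma \ref{l.atrep}, this realises $M$ as a disjoint union of two compact, hence clopen, subsets. Assuming $M$ is connected (the standing convention for the paper), one of $\cA, \cR$ must be empty.

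At that point the work is done: Corollary \ref{c.anosov} states precisely that $\cA^+ = \emptyset$ or $\cA^- = \emptyset$ implies $f$ is topologically Anosov, and since $\cR \subset \cA^+$ and $\cA \subset \cA^-$, the emptiness of $\cA$ forces $\cA^- = \emptyset$ (because every center leaf is then entirely in $\cA^+$, so no center arc can have $\cA^-$-points), and symmetrically for $\cR$. Thus $f$ is topologically Anosov, completing the equivalence.

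There is no real technical obstacle here; the corollary is essentially an assembly of Lemmas \ref{l.direct}, \ref{l.atrep}, \ref{l.potpM}, and Corollaries \ref{c.anosov} and \ref{c.anosovpotp}. The only conceptual point worth flagging is the role of connectedness of $M$: without it one could in principle have some components entirely contained in $\cA$ and others entirely in $\cR$, and in that case the stable leaves on the two types of components would have different dimensions, so the definition of topologically Anosov (which requires a foliation $\cW^s$ of well-defined dimension) would already be violated on the ambient manifold. Hence the connectedness assumption is the natural hypothesis under which the stated equivalence holds cleanly.
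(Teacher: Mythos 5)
Your proposal is correct and follows essentially the same route as the paper: the forward direction is Corollary \ref{c.anosovpotp}, and the converse combines Lemma \ref{l.potpM} with the fact that $\cA$ and $\cR$ are closed and disjoint, so connectedness of $M$ forces one of them to be empty (hence $\cA^+=\emptyset$ or $\cA^-=\emptyset$), and Corollary \ref{c.anosov} concludes. The extra details you supply (disjointness via Lemma \ref{l.direct}(1), closedness via Lemma \ref{l.atrep}) are exactly what the paper leaves implicit.
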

\begin{proof}
The only if part is Corollary \ref{c.anosovpotp}. The if part is a consequence of Lemma \ref{l.potpM} above and the fact that $\cA$ and $\cR$ are closed disjoint sets. By connectedness, either $\cA=M$ or $\cR=M$ and Corollary \ref{c.anosov} gives the result.
\end{proof}

We need the following proposition to prove our main theorem when $dim E^{uu}=1$. It is based on a Hiraide's argument \cite{H} of a Newhouse's Theorem (\cite{N}) :

\begin{proposition}\label{p.newhouse}
Let $f:M\to M$ be an expansive  partially hyperbolic diffeomorphism $TM=E^{ss}\oplus E^c\oplus E^{uu}$  with one-dimensional center and dynamically coherent. Assume that $dim E^{uu}=1$ and there is a compact repelling set $\cR\neq M$ such that if $x\in \cR$ then $\cW^{cs}(x)\subset\cR$ and
$$\cW^{cs}(x)=\cW^s(x):=\{y\in M: dist(f^n(y),f^n(y))\to_{n\to+\infty}0\}.$$
Then, given any neighborhood $U$ of $\cR$ there exists a point $y\in U\backslash \cR $ such that $\cW^{cs}(y)\subset U.$
\end{proposition}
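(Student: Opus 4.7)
The plan is to follow Hiraide's implementation \cite{H} of Newhouse's argument \cite{N} for codimension-one (here stable) foliations: find a $\cW^{cs}$-leaf $L\subset\cR$ that bounds $\cR$ in the one-dimensional transverse direction $\cW^{uu}$, and show that $\cW^{cs}$-leaves on the non-$\cR$ side, sufficiently close to $L$, are squeezed between $L$ and another boundary leaf of $\cR$, hence remain inside any prescribed neighborhood of $\cR$.

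For the set-up, using Lemma~\ref{l.atrep} I shrink $U$ and assume it has the form $U=\bigcup_{z\in\cR}\cW^{uu}_\varepsilon(z)$, with $\overline{f^{-1}(U)}\subset U$ and $\bigcap_{n\ge 0}f^{-n}(U)=\cR$; a point $y$ with $\cW^{cs}(y)$ contained in this smaller $U$ still works for the original $U$. I then choose a periodic point $p\in\cR$, available by Remark~\ref{r.index}, and examine the unstable arc $\tau=\cW^{uu}_\varepsilon(p)\subset U$. The set $A=\tau\cap\cR$ is closed in $\tau$ and contains $p$. One may choose $p$ so that $\cW^{uu}(p)\not\subset\cR$: otherwise, $\cR$ would contain a $\cW^{uu}$-arc at a periodic point, whose $\cW^{cs}$-saturation is an open subset of $\cR$, and iterating this under $f$ together with compactness and invariance of $\cR$ would contradict $\cR\neq M$. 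Thus $A\subsetneq\tau$, and I pick $p^*\in A$ that is accumulated from one side by points $q_n\in\tau\setminus\cR$ with $q_n\to p^*$.

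The heart of the argument is to show $\cW^{cs}(q_n)\subset U$ for $n$ large. For this I would cover the compact set $\cR$ by finitely many foliated product boxes $B_i\simeq D^{cs}_i\times D^{uu}_i$ furnished by dynamical coherence, in which the plaques of $\cW^{cs}$ are the slices $D^{cs}_i\times\{t\}$ and $\cR\cap B_i=D^{cs}_i\times K_i$ for a compact $K_i\subset D^{uu}_i$. In each box, $\cW^{cs}$-plaques disjoint from $\cR$ are labeled by gap points $t\in D^{uu}_i\setminus K_i$. Since $\cR$ is $\cW^{cs}$-saturated, as one continues $\cW^{cs}(q_n)$ from a box $B_i$ into an overlapping box $B_j$ the leaf cannot cross a slice of $\cR$, so it stays in the connected component of $D^{uu}_j\setminus K_j$ selected by the incoming plaque. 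Chaining these box-to-box transitions along all of $\cW^{cs}(q_n)$ confines the leaf to a tube around $\cR$ whose $\cW^{uu}$-width is controlled by $\operatorname{dist}(q_n,\cR)$; for $n$ large this width is less than $\varepsilon$, forcing $\cW^{cs}(q_n)\subset U$ and giving $y:=q_n$ as the required point.

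The principal obstacle is to justify this global channel-width bound: \emph{a priori} the holonomy of $\cW^{cs}$ around loops in $\cW^{cs}(p^*)$ could expand $\cW^{uu}$-transverse distances, allowing $\cW^{cs}(q_n)$ to drift away from $\cR$ before it closes up. This is exactly what Hiraide's refinement of Newhouse's argument rules out: combining the topologically hyperbolic structure on $\cR$ given by Remark~\ref{r.index} with the uniform contraction of $\cW^{uu}$ under $f^{-1}$, any such expansive holonomy would, upon iterating $f^{-n}$, produce unstable arcs inside $\cR$ of unbounded length, contradicting compactness of $\cR$ and its $\cW^{cs}$-saturation. Once this non-expansion of holonomy is in hand, taking $q_n$ close enough to $p^*$ concludes the proof.
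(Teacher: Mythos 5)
Your set-up (shrinking $U$, locating a point of $\cR$ with an adjacent gap in its unstable leaf, covering $\cR$ by foliated product boxes in which $\cR$ is a union of $cs$-plaques) matches the paper's, but the step you yourself flag as the ``principal obstacle'' --- the global channel-width bound --- is exactly where the proof lives, and your proposed resolution does not work. You claim that an expanding holonomy would, ``upon iterating $f^{-n}$, produce unstable arcs inside $\cR$ of unbounded length.'' But $f^{-1}$ uniformly \emph{contracts} unstable arcs, and the transverse arcs measuring the drift of $\cW^{cs}(q_n)$ away from $\cR$ are not contained in $\cR$ (only one endpoint is), so no long unstable arcs inside $\cR$ are produced; the sentence is a non sequitur. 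A direct dynamical bound on the holonomy also runs into a genuine circularity: to transport the estimate along a path $\gamma$ in $\cW^{cs}(p^*)$ one would iterate forward until $f^m(\gamma)$ is short, but the paths one must control get longer as $q_n\to p^*$ (the putative escape point moves out along the leaf), so the $m$ needed to shrink $\gamma$ and the $m$ allowed by the transverse expansion need not be compatible. Nothing in your argument prevents $\cW^{cs}(q_n)$, after passing through the boxes infinitely many times, from landing in the outermost (unbounded) component of $D^{uu}_j\setminus K_j$ and then leaving the union of boxes altogether.

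The paper's proof supplies two ingredients that are absent from your proposal. First, the leaves $\cW^{cs}(x)=\cW^s(x)$, $x\in\cR$, are contractible (Euclidean), and $\dim\cW^{cs}\ge 2$; this is what makes the lifted endpoint $\gamma_z(1)$ independent of the path (Claim 1 in the paper): any loop in $\cW^{cs}(x_0)\setminus E$ is homotoped into $\partial E$, where the foliation is a trivial product, so the holonomy is trivial --- no dynamics is used. Second, and crucially, the base point $x_0$ and the compact ball $E\subset\cW^{cs}(x_0)$ are chosen so that $E$ absorbs all the ``topmost'' plaques $P_i$ of $\cR$ in the covering boxes; consequently every $x\in\cW^{cs}(x_0)\setminus E$ has a point of $\cR$ directly above it \emph{within its own box}, producing a compact interval $L_x\subset U_{x_i}\subset U$ meeting $\cR$ only in its endpoints. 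The nearby leaf is then trapped in the interval bundle $\bigcup_x L_x$ together with $V=\overline{E}\times(-\varepsilon,\varepsilon)$, all of which sits inside $U$ by construction --- no quantitative width estimate in terms of $\operatorname{dist}(q_n,\cR)$ is needed or proved. Without these two ingredients (triviality of holonomy via contractibility of the leaves, and the trapping intervals $L_x$ obtained from the choice of $E$), your argument does not close.
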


\begin{proof} We will follow the lines of \cite{dMM} on Hiraide's argument.
We may assume that $E^{uu}$ is orientable and oriented, and so we have an orientation on the unstable foliation $\cW^u$ We have that $dim \cW^{cs}\ge 2$ and for points in $x\in\cR$ we know that $\cW^{cs}(x)$ is (diffeomorphic to) an euclidean space.

Let $U$ be a neighborhood of $\cR.$  For each $x\in\cR$ we can consider a product neighborhood $U_x\cong D^{cs}(x)\times [-1,1]\subset U$ where $D^{cs}(x)$ is a compact $cs$-ball such that for $t\in [-1,1]$ we have that $D^{cs}(x)\times \{t\}$ is a ball in the center-stable foliation and for $y\in D^{cs}$ we have that $\{y\}\times [-1,1]$ is an arc in $\cW^u(y).$

Since $\cR\neq M$ we may assume that $D^{cs}(x)\times \{-1,1\}\cap\cR=\emptyset.$ We can take a finite covering of $\cR$ by $\{U_{x_i}, i=0,1,\cdots, k\}$ with $U_{x_i}\subset U$ for every $i$. The repelling set $\cR$ has empty interior (otherwise is the whole of $M$) and so $\cR\cap\cW^u(x)$ has empty interior in $\cW^u(x)$ for any $x\in\cR.$ Therefore, there exist $x\in\cR$ and an unstable arc $I^u_x=(x,x^u)\subset \cW^u(x)$ in the orientation of $\cW^u$ such that $I^u_x\cap\cR=\emptyset.$ We may assume without loss of generality that $x=x_0$ and that $I^u_x=\{x_0\}\times (0,\delta)$ in $U_{x_0}.$

For each $i=0,1,\cdots, k$ let $t_i=sup\{t\ge 0: (x_i,t)\in\cR\}$ and denote by $P_i$ the plaque in $U_{x_i}$ through $(x_i,t_i).$ Let us consider a ball $E:=D^{cs}(x_0,r)\subset \cW^{cs}(x_0)$ such that $(\cW^{cs}(x_0)\backslash E) \cap P_i=\emptyset $ for every $i=0,1,...,k.$ In other words, if $(x_i,t_i)\in \cW^{cs}(x_0)$ then $P_i\subset E.$
Let $V=\overline{E}\times (-\varepsilon,\varepsilon)$ be a product neighborhood where $\overline{E}=E\cup \partial E$ is the closed ball and we may assume that $\{x_0\}\times (0,\delta)\subset V\subset U.$

Now, for each $x\in \cW^{cs}(x_0)\backslash E$ there exists an arc $L_x=[x,x+\delta_x] \subset \cW^u(x)$ such that $L_x\cap\cR=\{x,x+\delta_x\}$, i.e., $L_x$ intersects $\cR$ just in its endpoints. Indeed, since $x\in \cW^{cs}(x_0)$ it cannot be approximated by ``above'' (in the orientation of $E^{uu}$) by points in $\cR$, and since $x\notin E$ then it cannot belong to some $P_i$  and hence it must have a point of $\cR$ above it. These observations imply the existence of $L_x.$ Observe that if $x\in U_{x_i}$ then $L_x\subset U_{x_i}.$ Clearly $L_x$ depends continuously on $x.$ We may assume also that $\varepsilon$ is small so that $x\times (0,\varepsilon)\subset L_x$ for any $x\in\partial E.$

Observe that $\displaystyle{\bigcup_{x\in\cW^{cs}(x_0)\backslash E}L_x}$ is a a foliated interval bundle with base $\cW^{cs}(x_0)\backslash E$ and we have a well-defined projection $\displaystyle{\pi: \bigcup_{x\in\cW^{cs}(x_0)\backslash E}L_x\to \cW^{cs}(x_0)\backslash E}$ by $\pi (L_x)=\pi ([x,x+\delta_x])=x.$

Fix $x_0'\in\partial E$ and $z\in \{x_0'\}\times (0,\varepsilon)\subset L_{x_0'}.$ Let $x\in\cW^{cs}(x_0)\backslash E$ and $\gamma:[0,1]\to\cW^{cs}(x_0)\backslash E$ be a path with $\gamma(0)=x_0'$ and $\gamma(1)=x.$ We can lift $\gamma$ to $\gamma_z;[0,1]\to \cW^{cs}(z)$ such that $\pi\circ\gamma_z=\gamma.$
\vskip 5pt
\textit{Claim 1: The projection $p_\gamma:L_{x_0'}\to L_x$ by $p_\gamma(z)=\gamma_z(1)$ is independent of the path $\gamma:[0,1]\to \cW^{cs}(x_0)\backslash E$ joining $x_0'$ and $x.$ }

Recall that $dim \cW^{cs}\ge 2.$ Let $\gamma$ and $\gamma'$ be two path joining $x_0'$ and $x$ in $\cW^{cs}(x_0)\backslash E$. Since $\cW^{cs}(x_0)$ is contractible, the path $\gamma'^{-1}*\gamma$ from $x_0'$ to itself is homotopic (within $\cW^{cs}(x_0)\backslash E$) to a path $\alpha$ contained in $\partial E$ from $x_0'$ to itself. Clearly the lift of $\alpha$ to $\alpha_z$ in $\cW^{cs}(z)$ is a closed path (recall that $V=\overline{E}\times (0,\varepsilon)$ is a foliated neighborhood). Therefore $\alpha_z(1)=z$ and hence $\gamma_z(1)=\gamma'_z(1).$
\vskip 5pt

\textit{Claim 2: Let $y=(x_0,t)\in \{x_0\}\times (0,\delta)\subset V$ and let $z=(x_0',t)\in V$. Then $\cW^{cs}(y)\subset U.$}

By Claim 1 we have a well defined map $F:\cW^{cs}(x_0)\backslash E\to \cW^{cs}(y)=\cW^{cs}(z)$ by $F(x)=\gamma_z(1).$ This map can be extended to the whole $\cW^{cs}(x_0)$ by just defining $F(x)=(x,t)$ if $x\in E.$ Clearly $F$ is injective and a local homeomorphism. Moreover  $F(\cW^{cs}(x_0))$ is open and closed in $\cW^{cs}(y)$  and so it is surjective. It is obvious that $F(x)\in U$ for all $x\in\cW^{cs}(x_0).$ This proves the claim.

Since $y$ was chosen outside $\cR$ the proof of the proposition is complete.

\end{proof}

\noindent We have all the ingredients to proceed to the proof of our main result.
\vskip 5pt
\noindent \textbf{Proof of Theorem \ref{t.main}:}  Whatever the assumptions on Theorem \ref{t.main} one choose, one gets that either $\cR=\emptyset$ or $\cA=\emptyset.$ Indeed:\\
\noindent\textbullet\;  If $\Omega(f)=M$ we have that $\cA$ and $\cR$ cannot coexist and by Corollary \ref{c.anosov} we conclude that $f$ is topologically Anosov.

\noindent\textbullet\; If all the periodic points have the same topological stable index then, by Remark \ref{r.index} we see that either $\cR=\emptyset$ or $\cA=\emptyset$ and so either $\cA^+$ or $\cA^-$ are empty, and by Corollary \ref{c.anosov} we conclude that $f$ is topologically Anosov as well.

\noindent\textbullet\; Assume that $dim E^{uu}=1.$ If either $\cA^+=\emptyset$ or $\cA^-$ are empty  we are done as before. Otherwise, $\cA$ and $\cR$ are nonempty and so $\cR$ is as in Proposition \ref{p.newhouse}. We can choose a neighborhood $U$ of $\cR$ such that $U\cap\cA=\emptyset.$ By Proposition \ref{p.newhouse} we get the existence of a center-stable leaf $\cW^{cs}(y)$ such that $\cW^{cs}(y)\cap \left(\cA\cup\cR\right)=\emptyset. $ This contradicts Corollary \ref{c.cover}. If $dim E^{ss}=1$ we get the same result arguing with $f^{-1}.$

\noindent\textbullet\; If $dim M\le 4$ then it must hold that $dim E^{uu}$ or $E^{ss}$ has dimension one. 
\noindent\textbullet\; The case of the POTP on $M$ was done in Corollary \ref{c.equivalent}.

This concludes the proof of Theorem \ref{t.main}, assuming Proposition \ref{p.options}.

\vskip 5 pt

\noindent\textbf{Proof of Corollary \ref{c.qa}:} As we said in the introduction, for a quasi-Anosov diffeomorphism the fact that $\Omega(f)=M$ or that the periodic points have the same stable index imply that $f$ is Anosov by the work of Mañé in \cite{M2}. Now assume that $dim E^{uu}=1.$ If either $\cA+=\emptyset$ or $\cA^-=\emptyset$ we have that all periodic points have the same index and therefore $f$ is Anosov. Otherwise, $\cA$ and $\cR$ are nonempty. and $\cR$ is a codimension one repeller in the hypothesis of Proposition \ref{p.newhouse}. Let $U$ be a neighborhood of $\cR$ disjoint form $\cA.$ Proposition \ref{p.newhouse} give  a contradiction with Corollary \ref{c.cover} as before. The same argument works if $dim E^{ss}=1$ arguing with $f^{-1}.$ And if $dim M\le 4$ then either $dim E^{uu}=1$ or $dim E^{ss}=1$ and then $f$ is Anosov. Finally, if it has the POTP property then it is Anosov by the argument above, or just by recalling classical result that an Axiom A diffeomorphism with the POTP is structurally stable and so it has the (strong) transversality condition. Since it is expansive, it must be Anosov.

\section{Proof of Proposition \ref{p.options}}\label{s.proofprop}

Let $\alpha$ be an expansivity constant of our expansive partially hyperbolic diffeomorphism $f:M\to M.$ The proof of Proposition \ref{p.options} is done through several lemmas that use classical/folklore arguments on expansiveness.

\begin{lemma}\label{l.inf}
Let $I^c$ be a center arc such that $\liminf_{n\to +\infty}|f^n(I^c)|=0.$ Then $$\lim_{n\to +\infty}|f^n(I^c)|=0.$$
\end{lemma}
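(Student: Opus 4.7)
The plan is to argue by contradiction. Suppose $\liminf_{n\to+\infty}|f^n(I^c)|=0$ but $|f^n(I^c)|\not\to 0$, so $\limsup_{n\to+\infty}|f^n(I^c)|>0$. Let $\alpha>0$ be an expansivity constant for $f$, and let $C>1$ be a uniform bound on $\|Df\|$ and $\|Df^{-1}\|$. Choose $\epsilon>0$ small enough that $C\epsilon<\alpha$ and such that $|f^n(I^c)|\geq\epsilon$ for infinitely many $n$. Fix a sequence $n_k\to\infty$ with $|f^{n_k}(I^c)|\to 0$ and, for each $k$, let $m_k$ be the smallest integer $\geq n_k$ with $|f^{m_k}(I^c)|\geq\epsilon$. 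Then $|f^{m_k}(I^c)|\in[\epsilon,C\epsilon]$ by the one-step bound, $|f^{m_k-j}(I^c)|<\epsilon$ for every $1\leq j\leq m_k-n_k$ by minimality, and $m_k-n_k\to\infty$, since bi-Lipschitz control forces $m_k-n_k\geq\log(\epsilon/|f^{n_k}(I^c)|)/\log C$.

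The next step is to extract a limit arc. Arc-length parametrize $f^{m_k}(I^c)$ and apply Arzelà-Ascoli together with the continuity of the center foliation to obtain, along a subsequence, a uniform limit tracing out a non-trivial center arc $J$ of length in $[\epsilon,C\epsilon]$. Non-triviality is guaranteed by the uniform comparability between arc length and endpoint-distance for short arcs of a continuous $1$-dimensional foliation on the compact manifold $M$; taking $\epsilon$ smaller than the associated scale, the endpoints of $f^{m_k}(I^c)$ remain uniformly separated, and therefore so do the endpoints of $J$. Using continuity of each $f^{-j}$ together with lower semi-continuity of length, the bounds $|f^{m_k-j}(I^c)|<\epsilon$ pass to the limit to yield $|f^{-j}(J)|\leq\epsilon$ for every $j\geq 0$.

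The crucial step is to upgrade this one-sided control to bi-infinite control. Apply the same kind of extraction to the backward orbit of $J$. In the principal case, $\limsup_{n\to+\infty}|f^{-n}(J)|>0$: pick a subsequence $n_i\to\infty$ with $|f^{-n_i}(J)|$ bounded below, and extract a uniform subsequential limit $K$, a non-trivial center arc. For every fixed $j\in\Z$ and all $i$ large enough that $n_i-j\geq 0$, one has $|f^{j-n_i}(J)|\leq\epsilon$, so passing to the limit gives $|f^j(K)|\leq\epsilon<\alpha$ for every $j\in\Z$. The two distinct endpoints of $K$ then have orbits $\alpha$-close for all times, contradicting expansiveness.

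The remaining subcase is $|f^{-n}(J)|\to 0$, handled by running the analogous analysis forward on $J$: either $|f^n(J)|\to 0$ as well, in which case (after passing to a sufficiently small sub-arc so that $\sup_{n\in\Z}|f^n(\cdot)|<\alpha$) expansiveness forces $J$ to be a point; or $\limsup_{n\to+\infty}|f^n(J)|>0$ and the whole transition-extraction procedure is repeated with $J$ in place of $I^c$, ultimately producing a non-trivial center arc with bi-infinite orbit bounded by $\epsilon$ and the same contradiction. The main obstacle I anticipate is ensuring non-degeneracy of all the extracted limit arcs at each stage, which is precisely why the bi-Lipschitz bound on $Df$ and the length-versus-endpoint-distance comparison for short center arcs must be used carefully throughout.
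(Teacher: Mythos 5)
Your initial extraction is sound: the first small-to-large transition does produce a non-trivial center arc $J$ of length comparable to $\epsilon$ with $|f^{-j}(J)|\le\epsilon$ for all $j\ge 0$, and both your principal case ($\limsup_n|f^{-n}(J)|>0$, giving the bi-infinitely bounded arc $K$) and the sub-case where $J$ shrinks in both time directions are handled correctly. The gap is in the remaining sub-case, where $|f^{-n}(J)|\to 0$ but $\limsup_{n\to+\infty}|f^n(J)|>0$. You assert that the transition-extraction procedure can be repeated with $J$ in place of $I^c$ to ultimately produce a non-trivial arc with bi-infinite orbit bounded by $\epsilon$, but your procedure requires an arc whose forward lengths have $\liminf$ equal to $0$, and $J$ need not satisfy this; and even when it does, re-running the extraction only yields another arc with the same one-sided (backward) control, so the argument can recurse indefinitely without ever closing. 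An arc that contracts in the past and expands in the future is not by itself contradictory --- it is exactly the behaviour allowed by option (2) of Proposition \ref{p.options} --- so no contradiction can be extracted from $J$ alone in this sub-case. The information you discarded is that the \emph{original} arc becomes small again at a later time $n_{k+1}>m_k$.

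The paper's proof avoids this by extracting the transition inside a time window that is small at \emph{both} ends: it takes $n_j<m_j<n_{j+1}$ with $|f^{n_j}(I^c)|<1/j$, $|f^{m_j}(I^c)|>\eta$ and $|f^{n_{j+1}}(I^c)|<1/(j+1)$, and chooses a maximal sub-arc $I^c_j\subset f^{n_j}(I^c)$ with $|f^k(I^c_j)|\le\min\{\eta,\alpha\}$ for $0\le k\le n_{j+1}-n_j$. Maximality forces $|f^{k_j}(I^c_j)|=\min\{\eta,\alpha\}$ for some $k_j$ in the window, and the smallness of the ambient arc at \emph{both} endpoints of the window, together with the bounded growth rate of $Df^{\pm1}$, forces $k_j\to\infty$ \emph{and} $n_{j+1}-n_j-k_j\to\infty$ simultaneously. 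An accumulation arc of $f^{k_j}(I^c_j)$ then has length $\min\{\eta,\alpha\}>0$ and all of its iterates, forward and backward, bounded by $\alpha$ in a single stroke, contradicting expansiveness. You should restructure your extraction around such two-sided windows; the remaining ingredients of your write-up (Arzel\`a--Ascoli on arc-length parametrizations, lower semi-continuity of length, and the length versus endpoint-distance comparison ensuring non-degeneracy of limit arcs) then carry over unchanged.
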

\begin{proof}
Otherwise, there exist $\eta>0$ and sequences $n_j, m_j\to \infty$ such that
\begin{itemize}
\item $|f^{n_j}(I^c)|<\frac{1}{j}.$
\item $|f^{m_j}(I^c)|>\eta.$
\item $n_j<m_j<n_{j+1}.$
\end{itemize}

It follows that $m_j-n_j\to\infty$ and $n_{j+1}-m_j\to\infty.$ Then, for each $j$, we can choose a maximal center arc $I^c_j\subset f^{n_j}(I^c)$ satisfying
$$|f^k(I^c_j)|\le \min\{\eta,\alpha\} \mbox{ for }0\le k\le n_{j+1}-n_j.$$ It follows that there exists $k_j$ such that $|f^{k_j}(I^c_j)|=\min\{\eta,\alpha\}$. Besides, it holds that $k_j\to\infty$ and $n_{j+1}-n_j-k_j\to \infty.$
Let $J^c$ be an accumulation arc of $f^{k_j}(I^c_j).$ Then $|f^n(J^c)|\le\alpha$ for every $n\in\Z$ contradicting the expansiveness of $f.$
\end{proof}

\begin{lemma}\label{l.bounded}
There exists $\delta>0$ such that if  $I^c$ is  a center arc with $$\limsup_{n\to+\infty}|f^n(I^c)|<\infty$$ then one and only one of the following is true:
\begin{enumerate}
\item $\lim_{n\to +\infty}|f^n(I^c)|=0.$
\item there exists $n_0$ such that $|f^n(I^c)|\ge \delta$ for $n\ge n_0.$
\end{enumerate}
\end{lemma}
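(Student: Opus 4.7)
The plan is to argue by contradiction, following the template of Lemma~\ref{l.inf}. Let $\alpha$ be the expansivity constant and take $\delta$ to be a small fraction of $\alpha$ (to be tuned; the reader should think $\delta<\alpha/2$). Suppose some arc $I^c$ with $L:=\limsup_{n\to+\infty}|f^n(I^c)|<\infty$ satisfies neither (1) nor (2). By the contrapositive of Lemma~\ref{l.inf}, $\ell:=\liminf_{n\to+\infty}|f^n(I^c)|>0$; and since (2) fails there are arbitrarily large $n$ with $|f^n(I^c)|<\delta$, whence $0<\ell\le\delta$.

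Extract a subsequence $n_j\to+\infty$ with $|f^{n_j}(I^c)|\to\ell$, parametrize arcs proportionally to arclength, and pass to a further subsequence so that $f^{n_j}(I^c)$ converges to some $J^c\subset\cW^c$. Then $|J^c|=\ell>0$. For each fixed $m\in\Z$ we have $n_j+m\to+\infty$, so continuity of $f^m$ together with the definitions of $\ell$ and $L$ yields
\[
\ell\;\le\;|f^m(J^c)|\;\le\;L\qquad\text{for every }m\in\Z.
\]
Thus $J^c$ is a positive-length center arc whose bi-infinite orbit is trapped in $[\ell,L]$.

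If $L<\alpha$ this closes the argument at once: the two endpoints of $J^c$ are distinct and their orbits stay inside the arcs $f^m(J^c)$ of length $<\alpha$, hence at pairwise distance $<\alpha$ forever, contradicting expansiveness. If $L\ge\alpha$ I would mimic the maximal subarc construction in Lemma~\ref{l.inf}. Since $|f^n(I^c)|$ now oscillates between values $<\delta$ and values $\ge\alpha/2$ infinitely often, one takes a ``small return'' $n_j$ with $|f^{n_j}(I^c)|<\delta$ and the first subsequent time $m_j$ with $|f^{m_j}(I^c)|>\alpha/2$, chooses the maximal $I^c_j\subset f^{n_j}(I^c)$ with $|f^k(I^c_j)|\le\alpha/2$ on $0\le k\le m_j-n_j$ attaining equality at some $k_j$, and lets $K^c$ be an accumulation arc of $f^{k_j}(I^c_j)$. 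If $k_j\to+\infty$ and $m_j-n_j-k_j\to+\infty$, then $|f^m(K^c)|\le\alpha/2$ for every $m\in\Z$ while $|K^c|=\alpha/2>0$, again contradicting expansiveness.

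The main obstacle I anticipate is exactly this second sub-case. In Lemma~\ref{l.inf} the key point was $|I^c_j|\le|f^{n_j}(I^c)|<1/j\to 0$, which by the Lipschitz continuity of $f^{\pm1}$ forced both $k_j\to+\infty$ and $m_j-n_j-k_j\to+\infty$ and hence gave the required two-sided control on the limit arc. Here $|I^c_j|$ is only known to satisfy $|I^c_j|\le\delta$, so neither unboundedness is automatic. Securing them is where a single universal $\delta$ really earns its keep: $\delta$ must be taken small in terms of $\alpha$ and the Lipschitz constants of $f$ and $f^{-1}$, and the subsequences $\{n_j\},\{m_j\}$ must be selected so that the oscillation between successive small returns is sharp enough. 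This quantitative calibration is the heart of the proof.
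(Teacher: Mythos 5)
Your setup is sound as far as it goes: the reduction via Lemma~\ref{l.inf} to $0<\ell\le\delta$, the construction of the limit arc $J^c$ with $\ell\le|f^m(J^c)|\le L$ for all $m\in\Z$, and the immediate contradiction when $L<\alpha$ are all correct. The gap is exactly the one you flag at the end, and it is not a matter of ``quantitative calibration'' that a cleverer choice of a \emph{fixed} $\delta$ or of the subsequences can repair. For fixed $\delta>0$, the number of iterates needed to grow an arc of length $<\delta$ up to length $\alpha/2$ is at least $\log(\alpha/2\delta)/\log \mathrm{Lip}(f)$ --- a finite constant that does \emph{not} tend to infinity with $j$. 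So $k_j$ may well stay bounded, and then an accumulation arc $K^c$ of $f^{k_j}(I^c_j)$ only satisfies $|f^m(K^c)|\le\alpha/2$ for $m\ge-\sup_j k_j$, i.e.\ one-sided control. That is not a contradiction with expansiveness: a positive-length center arc all of whose forward iterates are short is perfectly possible (this is what $\cA^+$ is made of). The symmetric difficulty affects $m_j-n_j-k_j$. Since the whole argument in the main case $L\ge\alpha$ rests on obtaining two-sided control of a nontrivial limit arc, the proof does not close.

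The paper's proof avoids this by negating the statement correctly: if no $\delta$ works, then for each $n$ there is an arc $I^c_n$ with $\limsup_k|f^k(I^c_n)|<\infty$ and $0<\liminf_k|f^k(I^c_n)|=\delta_n$, where $\delta_n\to 0$. The transition times between the moments when the length is $\le 2\delta_n$ and the moments when it exceeds $\alpha$ then diverge \emph{as $n\to\infty$} (not as the return time tends to infinity for a single fixed arc), precisely because $\delta_n\to 0$; this is what produces a limit arc $J^c$ of length $\alpha$ with $|f^j(J^c)|\le\alpha$ for all $j\in\Z$ and hence the expansiveness contradiction. To repair your write-up, abandon the ``fix $\delta$ first'' framing and run the oscillation/maximal-subarc argument over such a sequence of arcs with liminf-lengths tending to $0$; your limit-arc observation can be retained as the treatment of the sub-case in which some $I^c_n$ has all sufficiently late iterates of length $\le\alpha$.
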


\begin{proof}
We will argue by  contradiction and assume that such a $\delta$ does not exist. Then, using Lemma \ref{l.inf} there exist $\delta_n\to 0$ and center arcs $I^c_n$ such that:
\begin{itemize}
\item $\limsup_{n\to\infty}|f^k(I^c_n)|<\infty.$
\item $\liminf_{n\to\infty}|f^k(I^c_n)|=\delta_n.$
\end{itemize}
We may assume without loss of generality that $|f^k(I^c_n)|\ge \frac{\delta_n}{2}$ for every $k\ge 0$ and that $2\delta_n<\alpha$ for every $n.$ Notice that there exists a sequence $k_m(n)\to +\infty$ such that
$$|f^{k_m(n)}(I^c_n)|\le 2\delta_n<\alpha\;\;\; \forall m \mbox{ and } n.$$
Now, if $|f^k(I^c_n)|\le \alpha $ for all $k \ge j_0(n)$, for some $n$ we get a contradiction with expansiveness by taking an accumulation arc of $f^k(I^c_n)$ as $k\to \infty$ for this $n.$

Therefore, there exists $\tilde{k}_m(n), k_m(n)<\tilde{k}_m(n)<k_{m+1}(n)$ such that $$|f^{\tilde{k}_m(n)}(I^c_n)|>\alpha \mbox{ for all }m \mbox{ and }n.$$ It follows that $\tilde{k}_m(n)-k_m(n)\to _n\infty$ and $k_{m+1}(n)-\tilde{k}_m(n)\to_n\infty.$ Arguing as in Lemma \ref{l.inf} (and fixing $m$) we may find a center arc $I^c_{n_m}\subset f^{\tilde{k}_m(n)}(I^c_n)$ such that
$$|f^j(I^c_{n_m})|\le \alpha \mbox{ for } 0\le j\le k_{m+1}(n)-k_m(n) \mbox{ and }|f^{j_n}(I^c_{n_m})|=\alpha \mbox{ for some }j_n.$$
Taking an accumulation arc $J^c$ of $f^{j_n}(I^c_{n_m})$ we get a contradiction since $|f^j(J^c)|\le\alpha$ for all $j\in\Z.$
\end{proof}

\begin{lemma}\label{l.lim0}
Let $I^c$ be a center arc such that $\limsup_{n\to+\infty}|f^n(I^c)|<\infty.$ Then,
$$\lim_{n\to +\infty}|f^n(I^c)|=0.$$
\end{lemma}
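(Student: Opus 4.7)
I argue by contradiction. Suppose $\limsup_{n\to+\infty}|f^n(I^c)|\le M<\infty$ but $|f^n(I^c)|\not\to 0$. By Lemma~\ref{l.bounded}, there exist $\delta>0$ and $n_0$ with $|f^n(I^c)|\in[\delta,M]$ for all $n\ge n_0$. Using compactness of $M$ and the continuity of the center foliation (from dynamical coherence), I extract a subsequence $n_k\to+\infty$ along which $f^{n_k}(I^c)$ converges in Hausdorff distance to an embedded center arc $J^c$. For each fixed $m\in\Z$, $f^{n_k+m}(I^c)\to f^m(J^c)$ and $|f^{n_k+m}(I^c)|\in[\delta,M]$ once $n_k+m\ge n_0$, giving $|f^m(J^c)|\in[\delta,M]$ for every $m\in\Z$.

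Fix $L$ with $M/L<\delta$ and partition $J^c=K_1\cup\cdots\cup K_L$ into equal subarcs. Since $\sum_j|f^m(K_j)|=|f^m(J^c)|\le M$, every $m\in\Z$ admits an index $j(m)$ with $|f^m(K_{j(m)})|\le M/L<\delta$. By pigeonhole, some $j_0$ satisfies $j(m)=j_0$ for infinitely many $m\to+\infty$ or for infinitely many $m\to-\infty$; say the former. Then $\liminf_{m\to+\infty}|f^m(K_{j_0})|<\delta$ excludes the second alternative of Lemma~\ref{l.bounded}, forcing $\lim_{m\to+\infty}|f^m(K_{j_0})|=0$. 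Applying Lemma~\ref{l.bounded} to $K_{j_0}$ in the backward direction (\ie to $f^{-1}$), either $\lim_{m\to-\infty}|f^m(K_{j_0})|=0$ already holds and we set $K:=K_{j_0}$, or a second partition-and-pigeonhole carried out inside $K_{j_0}$ toward $-\infty$ produces a subarc $K\subset K_{j_0}$ with $\lim_{m\to-\infty}|f^m(K)|=0$; since $K\subset K_{j_0}$, this automatically yields $\lim_{m\to+\infty}|f^m(K)|=0$ as well.

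To contradict expansiveness, note that since $|f^m(K)|\to 0$ as $|m|\to\infty$ and $|f^m(K)|\le M$ always, the set $B:=\{m\in\Z:|f^m(K)|\ge\alpha\}$ is finite. Partition $K$ into $N$ equal subarcs $K^1,\dots,K^N$; for each $m\in B$ the bound $\sum_i|f^m(K^i)|\le M$ forces at most $M/\alpha$ indices $i$ to satisfy $|f^m(K^i)|\ge\alpha$. Choosing $N>|B|\cdot M/\alpha$, pigeonhole yields some $K^{i_0}$ with $|f^m(K^{i_0})|<\alpha$ for every $m\in B$, and hence for every $m\in\Z$. Two distinct points of $K^{i_0}$ then have all ambient iterate distances $<\alpha$, contradicting expansiveness. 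The principal obstacle is the accumulation-arc step: one needs that Hausdorff limits of center arcs of length in $[\delta,M]$ remain embedded center arcs whose lengths do not collapse below $\delta$, which rests on the continuity of the center foliation and on the uniform two-sided length bounds to prevent degeneration in the limit.
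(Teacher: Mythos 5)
Your argument is correct, but it follows a genuinely different route from the paper's. The paper stays inside the original arc $I^c$: with $K=\sup_n|f^n(I^c)|$ and the $\delta$ of Lemma~\ref{l.bounded}, it observes that $m_0>K/\delta$ pairwise disjoint sub-arcs whose forward lengths do not tend to $0$ would eventually each have length $\ge\delta$ and force $|f^N(I^c)|>K$; it then manufactures such sub-arcs by locating (via bipartition) a point $x$ all of whose neighborhoods in $I^c$ fail to contract, and proving a claim (by an expansiveness argument) that arbitrarily close to $x$ there are closed sub-arcs avoiding $x$ that also fail to contract. You instead pass to an accumulation arc $J^c$ of $f^{n_k}(I^c)$ enjoying the two-sided bounds $\delta\le|f^m(J^c)|\le M$ for all $m\in\Z$, and then run pigeonhole arguments on equal subdivisions: first to extract a sub-arc contracting forward, then backward, and finally to produce a nondegenerate sub-arc all of whose iterates have length $<\alpha$, contradicting expansiveness directly. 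Both proofs rest on Lemma~\ref{l.bounded} plus a length-counting argument, but your central object (a limit arc with uniform two-sided bounds) does not appear in the paper's proof, and your final step re-derives, via subdivision, the basic fact that no nondegenerate center arc can have all iterates of length $\le\alpha$. The only point requiring care is the one you flag: that Hausdorff/\,$C^0$ limits of center arcs of length in $[\delta,M]$ are genuine center arcs whose lengths are the limits of the lengths; this follows from continuity of the center bundle (unit-speed parametrizations have derivatives determined by position up to a sign, so $C^0$ convergence upgrades to $C^1$), and it is exactly the ``accumulation arc'' device the paper itself uses without comment in Lemmas~\ref{l.inf}, \ref{l.borde} and \ref{l.sup}, so your proof is on the same footing as the paper's. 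A small cosmetic remark: in your first pigeonhole the forward case always occurs (there are infinitely many positive $m$), so the disjunction with $m\to-\infty$ is unnecessary there.
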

\begin{proof}

 Let $I^c$ be a center arc as in the hypothesis and let $K>0$ be such that $|f^n(I^c)|\le K$ for $n\ge 0.$ Let $\delta>0$ from Lemma \ref{l.bounded} and consider $m_0\in\Z^+$ such that $m_0>\frac{K}{\delta}.$ Notice that if we could find $m_0$ distinct sub-arcs $I^c_1,....,I^c_{m_0}$ of $I^c$ with pairwise disjoint interior such that $|f^n(I^c_m)|\not\to_{n\to+\infty} 0$ for each $m=1,...,m_0,$  then for each $m$ there exists $k_m$ such that $|f^j(I^c_m)|\ge \delta$ for $j\ge k_m$, and  for $N>\max\{k_m: m=1,...,m_0\}$ we have that $$K<m_0\delta\le |f^N(I^c_1)|+...+|f^N(I^c_{m_0})|\le |f^N(I^c)|\le K,$$
implying a contradiction. Thus, the idea of the proof of the lemma is: if the thesis does not hold we could find such sub-arcs. Assume then that $|f^n(I^c)|\not\to_n 0.$

We may assume without loss of generality that $I^c$ is closed. Let us first show that there exists a point $x\in I^c$ such that for every sub-arc $\ell^c\subset I^c$ containing $x$ in its interior (with respect to $I^c$) we have $|f^n(\ell^c)|\not\to_{n\to+\infty} 0.$  This can be proved by bipartition of $I^c$. Dividing $I^c$ into two equal sub-arcs $I^c_1, I^c_2$ one of them has to have the property that  $|f^n(I^c_j)|\not\to_{n\to+\infty} 0$ ($j=1$ or $2$) otherwise $|f^n(I^c)|\to_{n\to+\infty} 0$. Dividing again a suitable sub-arc and repeating this procedure infinitely many times we found the required point $x$. The point $x$ could be a boundary point of $I^c$. In this case, any arc in $I^c$ containing $x$, it has it in its interior with respect to $I^c.$
\vskip 5pt
\noindent\textit{Claim: For any $r>0$ and any arc $\ell^c_r\subset I^c$ with $|\ell^c_r|=r$ and containing $x$ in its interior  there exists a closed arc $J\subset \ell^c_r$ with $x\notin J$ such that  $|f^n(J)|\not\to_n 0$.}

Otherwise, there exists an arc $\ell^c\subset I^c$ containing $x$ such that  \textit{any} closed arc $J\subset \ell^c$ with $x\notin J$ satisfies $|f^n(J)|\to_n 0.$ We will arrive to a contradiction with expansiveness. Indeed, choose a sequence $r_n\to 0$ and $\ell^c_{r_n}\subset \ell^c$ containing $x$ in its interior and $|\ell^c_{r_n}|=r_n.$ We know from Lemma \ref{l.bounded} that for some $m_n$ that $|f^{m_n}(\ell^c_{r_n})|\ge \delta.$ Consider a closed arc $J_{m_n}\subset f^{m_n}(\ell^c_{r_n})$ such that $f^{m_n}(x)\notin J_{m_n}$ and $|J_{m_n}|\ge \delta/4.$ Let $\hat{J}_{m_n}\subset J_{m_n}$ be a maximal arc with $|f^m(\hat{J}_{m_n})|\le\alpha$ for $-m_n\le m.$ Observe that $f^{-m_n}(\hat{J}_{m_n})\subset \ell^c_{r_n}$, is closed and does not contain $x.$ Therefore $|f^m(\hat{J}_{m_n})|\to_m 0.$ And there must be some $k_{m_n}\ge -m_n$ such that $|f^{k_{m_n}}(\hat{J}_{m_n})|=\min\{\alpha,\delta/4\}.$ Since $r_n\to 0$ then $m_n-k_{m_n}\to \infty.$  If $L$ is an accumulation center arc of $f^{k_{m_n}}(\hat{J}_{m_n})$ we arrive to a contradiction since $|f^j(L)|\le\alpha $ for any $j\in\Z.$ This proves the claim.

\vskip 5pt

From the claim, we can choose  sequences $r_n\to 0$, arcs $\ell^c_{n}\subset I^c$ with $|\ell^c_n|=r_n$ containing $x$ and closed arcs $J_n\subset \ell^c_n$ not containing $x$ with $|f^m(J_n)|\not\to_m 0.$ We may assume that $\ell^c_{n+1}\cap J_n=\emptyset.$ Therefore, $J_1,\cdots, J_{m_0}$ are as in the beginning of our proof and we arrive to contradiction. This proves the lemma.
\end{proof}

The next lemma is a classical result regarding expansivity. We state it in terms of center arcs.

\begin{lemma}\label{l.borde}
There exists $N>0$ such that if $I^c$ is a center arc with $|f^j(I^c)|\le\alpha$ for all $0\le j\le n$ and exists $j_0$ with $|f^{j_0}(I^c)|=\alpha$ then $j_0\le N$ or $n-j_0\le N.$
\end{lemma}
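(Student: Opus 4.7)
The plan is to argue by contradiction in exactly the same classical expansiveness style used repeatedly in Lemmas \ref{l.inf}, \ref{l.bounded} and \ref{l.lim0}: if the two quantities $j_0$ and $n-j_0$ could both be made simultaneously arbitrarily large, then by recentering at the index $j_0$ and taking an accumulation arc we would manufacture a nontrivial center arc all of whose iterates have length $\le\alpha$, violating that $\alpha$ is an expansivity constant.

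More concretely, suppose for contradiction that no such $N$ exists. Then for every $k\in\N$ one can choose a center arc $I^c_k$, an integer $n_k$ and an index $j_k$ with $0\le j_k\le n_k$ such that $|f^j(I^c_k)|\le\alpha$ for all $0\le j\le n_k$, $|f^{j_k}(I^c_k)|=\alpha$, and $\min\{j_k,\,n_k-j_k\}>k$. I would then look at the recentered arcs
\[
K^c_k:=f^{j_k}(I^c_k),
\]
all of which have length exactly $\alpha$ and which satisfy $|f^i(K^c_k)|\le\alpha$ for every $i$ in the window $[-j_k,\,n_k-j_k]$. By construction this window exhausts all of $\Z$ as $k\to\infty$.

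Next I would extract, after passing to a subsequence, an accumulation center arc $J^c$ of the sequence $\{K^c_k\}$, exactly as was done in Lemmas \ref{l.inf}, \ref{l.bounded} and \ref{l.lim0}. Because each $K^c_k$ has length $\alpha$, the limit arc $J^c$ is nontrivial, and by continuity of $f$ the inequality $|f^i(J^c)|\le\alpha$ passes to the limit for each fixed $i\in\Z$. Taking any two distinct points $x,y\in J^c$ one obtains $d(f^i(x),f^i(y))\le|f^i(J^c)|\le\alpha$ for every $i\in\Z$, contradicting expansiveness with constant $\alpha$.

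The only step that requires any care is the extraction of the accumulation arc: it must be verified that a sequence of center arcs of bounded length admits a subsequence converging (in the Hausdorff sense on compacta) to a nondegenerate center arc of length $\alpha$. In the dynamically coherent setting this is standard and is precisely the same device the authors have already used several times in this section, so no new ingredient is required; I would just invoke it as in the earlier lemmas.
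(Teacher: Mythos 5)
Your proposal is correct and follows essentially the same argument as the paper: negate the statement, recenter the arcs at the time $j_k$ where the length equals $\alpha$, and take an accumulation arc whose iterates all have length at most $\alpha$, contradicting expansiveness. No meaningful difference from the paper's proof.
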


\begin{proof}
Otherwise, there exist $m_n\to\infty$, $I^c_n$ and $j_n$ such that
\begin{itemize}
\item $|f^j(I^c_n)|\le\alpha$ for all $0\le j\le n.$
\item $|f^{j_n}(I^c_n)|=\alpha.$
\item $j_n\to \infty$ and $n-j_n\to \infty.$
\end{itemize}
Then, taking an accumulation arc $J^c$ of $f^{j_n}(I^c_n)$ we get that $|f^j(J^c)|\le \alpha$ for all $n\in\Z.$
\end{proof}

\begin{lemma}\label{l.sup}
Let $I^c$ be a center arc with $\limsup_{n\to +\infty}|f^n(I^c)|=\infty.$ Then
$$\lim_{n\to+\infty}|f^n(I^c)|=\infty.$$
\end{lemma}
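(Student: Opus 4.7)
The plan is to argue by contradiction, imitating the proof structure of Lemma~\ref{l.lim0}. Assume $\limsup_n |f^n(I^c)|=\infty$ but $\lim_n|f^n(I^c)|\ne\infty$, so $L:=\liminf_n|f^n(I^c)|<\infty$; Lemma~\ref{l.inf} forces $L>0$ (else $\lim=0$, contradicting $\limsup=\infty$).

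The first and genuinely new ingredient would be a ``large-side'' analog of Lemma~\ref{l.bounded}: there exists a uniform $\delta_0>0$ such that every center arc $J$ with $\limsup_n|f^n(J)|=\infty$ satisfies $\liminf_n|f^n(J)|\ge\delta_0$. To prove this I assume to the contrary that arcs $J_n$ exist with $\liminf=\delta_n\to 0$ and $\limsup=\infty$, and choose times $a_n<s_n<b_n$ with $|f^{a_n}(J_n)|,|f^{b_n}(J_n)|\le 2\delta_n$ and $|f^{s_n}(J_n)|\ge 2\alpha$. Let $\tilde J_n\subset J_n$ be the maximal sub-arc such that $|f^k(\tilde J_n)|\le\alpha$ for every $a_n\le k\le b_n$. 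Maximality yields $k^*\in[a_n,b_n]$ with $|f^{k^*}(\tilde J_n)|=\alpha$, and Lemma~\ref{l.borde} applied to $f^{a_n}(\tilde J_n)$ on $[0,b_n-a_n]$ gives $k^*-a_n\le N$ or $b_n-k^*\le N$. In either case bounded distortion over at most $N$ steps starting from a length $\le 2\delta_n$ at time $a_n$ or $b_n$ yields $\alpha\le 2\delta_n\cdot\|Df\|_\infty^N$, which fails for $n$ large.

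Next I bisect. Since Lemma~\ref{l.lim0} gives the dichotomy that every sub-arc has $\limsup\in\{0,\infty\}$, the standard binary search---at each stage retaining a half with $\limsup=\infty$---produces nested sub-arcs shrinking to a point $x\in I^c$ such that every sub-arc of $I^c$ containing $x$ in its interior has $\limsup=\infty$. I would then transcribe the ``inner claim'' from the proof of Lemma~\ref{l.lim0} almost verbatim: for every sub-arc $\ell^c\subset I^c$ containing $x$ in its interior there exists a closed sub-arc $J\subset\ell^c$ with $x\notin J$ and $\limsup|f^n(J)|=\infty$. If the claim fails then every closed sub-arc of $\ell^c$ missing $x$ has $\lim=0$ by Lemma~\ref{l.lim0}. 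Picking $\ell^c_{r_n}\subset\ell^c$ around $x$ with $|\ell^c_{r_n}|=r_n\to 0$, using Step~1 to find $m_n$ with $|f^{m_n}(\ell^c_{r_n})|\ge\delta_0$, choosing a closed sub-arc $J_{m_n}\subset f^{m_n}(\ell^c_{r_n})$ of length $\ge\delta_0/4$ avoiding $f^{m_n}(x)$, and taking the maximal sub-arc $\hat J_{m_n}\subset J_{m_n}$ with $|f^m(\hat J_{m_n})|\le\alpha$ for all $m\ge-m_n$, an intermediate-value argument produces $k_{m_n}$ with $|f^{k_{m_n}}(\hat J_{m_n})|$ close to $\min\{\alpha,\delta_0/4\}$; since $r_n\to 0$ together with bounded distortion forces $k_{m_n}+m_n\to\infty$, an accumulation arc $L$ of $f^{k_{m_n}}(\hat J_{m_n})$ satisfies $|f^j(L)|\le\alpha$ for every $j\in\Z$, contradicting expansiveness.

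Finally, using the inner claim I iteratively select $M$ pairwise disjoint closed sub-arcs $J_1,\ldots,J_M\subset I^c$, each missing $x$ and with $\limsup|f^n(J_k)|=\infty$: after $J_1,\ldots,J_i$ are chosen, apply the claim to a small neighborhood of $x$ disjoint from $\bigcup_{k\le i}J_k$. By Step~1, $\liminf_n|f^n(J_k)|\ge\delta_0$ for every $k$, and since the $J_k$ have pairwise disjoint interiors,
\[
\liminf_{n\to+\infty}|f^n(I^c)|\ge \sum_{k=1}^{M}\liminf_{n\to+\infty}|f^n(J_k)|\ge M\delta_0.
\]
Choosing $M>L/\delta_0$ yields $\liminf_n|f^n(I^c)|>L$, contradicting $\liminf=L$. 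I expect the main obstacle to be the uniform lower bound (Step~1), since it is the genuinely new ingredient over Lemma~\ref{l.lim0}; once it is in hand, the remainder is a parallel adaptation of that proof with $\delta$ replaced by $\delta_0$.
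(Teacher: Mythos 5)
Your proof is correct, but it takes a genuinely different route from the paper's. The paper argues in one stroke: assuming $|f^{n_j}(I^c)|\le K$ along a sequence $n_j\to\infty$, it partitions $f^{n_j}(I^c)$ into sub-arcs that remain of length $\le\alpha$ throughout the whole window $[n_j,n_{j+1}]$, each but the last attaining length exactly $\alpha$ at some time $k_\ell$; Lemma \ref{l.borde} forces $k_\ell$ to lie within $N$ of one end of the window, so bounded distortion makes each such piece have length at least $C\alpha$ at time $n_j$ or at time $n_{j+1}$, which bounds the number of pieces by $2K/(C\alpha)+1$ and hence bounds $|f^n(I^c)|$ uniformly on the window, contradicting $\limsup_n|f^n(I^c)|=\infty$. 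You instead first establish a uniform $\delta_0>0$ such that $\limsup_n|f^n(J)|=\infty$ implies $\liminf_n|f^n(J)|\ge\delta_0$ (your Step 1, proved from the same two ingredients --- Lemma \ref{l.borde} plus bounded distortion over $N$ steps --- applied to a single maximal $\alpha$-bounded sub-arc squeezed between two ``small'' times), and then you rerun the architecture of the proof of Lemma \ref{l.lim0}: a bisection point $x$ all of whose neighborhoods in $I^c$ have unbounded forward lengths, the inner claim producing closed sub-arcs missing $x$ with $\limsup=\infty$, and finally $M$ pairwise disjoint such sub-arcs, whose liminfs superadd to give $\liminf_n|f^n(I^c)|\ge M\delta_0$, contradicting finiteness of the liminf. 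I checked the delicate points: the interleaving of times $a_n<s_n<b_n$ in Step 1 is legitimate because both the set of ``small'' times and the set of ``large'' times are infinite; the inequality $\liminf_n\sum_k|f^n(J_k)|\ge\sum_k\liminf_n|f^n(J_k)|$ is valid; and the inductive choice of $J_{i+1}$ in a neighborhood of $x$ avoiding the closed set $J_1\cup\cdots\cup J_i$ works since that set misses $x$. What the paper's version buys is economy --- it needs neither the dichotomy of Lemma \ref{l.lim0} nor the point-selection machinery. What yours buys is Step 1 as a standalone statement, a clean ``large-side'' counterpart of Lemma \ref{l.bounded} that makes the symmetry between the bounded and unbounded cases explicit.
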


\begin{proof}
Arguing by contradiction, assume that there exist $K>0$ and $n_j\to\infty$ such that $|f^{n_j}(I^c)|\le K. $
For each $j$ we may choose a partition $f^{n_j}(I^c)=I^c_1(j)\cup...\cup I^c_{k_j}(j)$  such that
\begin{itemize}
\item $|f^k(I^c_\ell(j))|\le\alpha$ for $0\le k\le n_{j+1}-n_j.$
\item For each $1\le \ell\le k_j-1$ there exists $k_\ell$ such that $|f^{k_\ell}(I^c_\ell(j))|=\alpha.$
\end{itemize}
By Lemma \ref{l.borde} we know  for each $1\le\ell\le k_j-1$ that  $(1)\;k_\ell\le N$ or $(2)\;n_{j+1}-n_j-k_\ell\le N$. Observe that there exists $C>0$ such that if $k_\ell\le N$ then $|I^c_\ell(j)|\ge C\alpha$ and if $n_{j+1}-n_j-k_\ell\le N$ then $|f^{n_{j+1}-n_j}(I^c_\ell(j))|\ge C\alpha.$
Consider
$$A=\{\ell: 1\le \ell\le k_j-1 \mbox{ and } k_\ell\le N\}\mbox{ and }B=\{\ell:1\le \ell\le k_j-1\mbox{ and }n_{j+1}-n_j-k_\ell\le N\}.$$
It follows that $\# A\le \frac{K}{C\alpha}$ and $\# B\le \frac{K}{C\alpha}.$ Therefore, for each $j$ we have
$$|f^n(I^c)|\le 2\frac{K}{C}+1\;\;\;\;\; \forall n, n_j\le n\le n_{j+1}.$$
Thus, $\limsup_{n\to+\infty}|f^n(I^c)|<\infty,$ a contradiction.
\end{proof}

We have all the ingredients to prove Proposition \ref{p.options}: Let $I^c$ be a center arc. If $\limsup_{n\to+\infty}|f^n(I^c)|=\infty$ then by Lemma \ref{l.sup} we know that $\lim_{n\to+\infty}|f^n(I^c)|=\infty.$ On the other hand, if $\limsup_{n\to+\infty}|f^n(I^c)|<\infty$ we get from Lemma \ref{l.lim0} that $\lim_{n\to+\infty}|f^n(I^c)|=0.$
The same holds when $n\to -\infty.$ And observe, arguing as before that cannot happen that $\lim_{n\to\pm\infty}|f^n(I^c)|=0$ by expansiveness. Thus, we have proved that the only three options as in the statement of the proposition are possible.

It remains to prove the last sentence of the proposition. Let $I^c_n$ be a sequence of center arcs converging to $J^c$ and with $\lim_m|f^m(I^c_n)|=0.$ We know that either $|f^n(J^c)|\to_m 0$ or $|f^m(J^c)|\to_m\infty.$ In case of the latter, we arrive to a contradiction. Indeed, if $|f^m(J^c)|\to_m\infty$ then there exists $k_n\to\infty$ such that $|f^{k_n}(I^c_n)|\to\infty.$ Choose a partition $I^c_n=I^c_1(n)\cup...\cup I^c_{m_n}(n)$ such that $|f^k(I^c_j(n))|\le\alpha$ for all $k\ge 0$ and $1\le j\le m_n$ and $|f^{\tilde{k}_j(n)}(I^c_j(n))|=\alpha$ for some $\tilde{k}_j(n)$ for all $1\le j\le m_n-1.$ There must exists $j_n$ such that $\tilde{k}_{j_n}(n)\to \infty.$ Finally, if $L^c$ is an accumulation arc of $f^{\tilde{k}_{j_n}(n)}(I^c_{j_n}(n))$ we get that $|f^n(L^c)|\le\alpha$ for all $n\in \Z,$ a contradiction.



\end{document}